\newcommand{\C}{\mathbb C}
\newcommand{\R}{\mathbb R}
\newcommand{\h}{\mathbb H}
\newcommand{\s}{\mathbb S}
\newcommand{\im}[1]{\mathrm{Im}#1}
\newcommand{\re}[1]{\mathrm{Re}#1}
\newcommand{\iso}[1]{\mathrm{Iso}(#1)}
\DeclareMathOperator{\id}{\mathrm{Id}}
\DeclareMathOperator{\psl2}{\widetilde{\mathrm{PSL}}_{2}(\R)}
\DeclareMathOperator{\S2R}{\mathbb{S}^2\times\mathbb{R}}
\newtheorem{theorem}{Theorem}[section]
\newtheorem{lemma}[theorem]{Lemma}
\newtheorem{proposition}[theorem]{Proposition}
\newtheorem{corollary}[theorem]{Corollary}
\newtheorem*{th-A}{Theorem \ref{th-A}}
\newtheorem*{prop-cylinder}{Proposition \ref{cylinders}}
\newtheorem*{cor-g-harmonic}{Corollary \ref{g-harmonic}}
\newtheorem*{prop-antiholo}{Proposition \ref{anti-holomorphic}}
\newtheorem*{lemma-curvature}{Lemma \ref{curvatura}}
\theoremstyle{definition}
\newtheorem{definition}[theorem]{Definition}
\newtheorem{example}[theorem]{Example}
\newtheorem{remark}[theorem]{Remark}
\numberwithin{equation}{section}
\numberwithin{figure}{section}
\begin{document}

\title[The Gauss map of minimal surfaces in $\s^2 \times \R$]{The Gauss map of minimal surfaces in $\s^2 \times \R$}

\author[Iury Domingos]{Iury Domingos$^{1,2}$}
\address{$^1$ Universit\'e de Lorraine \\ CNRS, IECL\\
F-54000 Nancy, France.}
\address{$^2$ Universidade Federal de Alagoas \\ Instituto de Matem\'{a}tica  \\ Campus A. C. Sim\~{o}es, BR 104 - Norte, Km 97, 57072-970.
Macei\'o - AL, Brazil.}
%
%
%
%
\email{domingos1@univ-lorraine.fr}
%
%
%
\thanks{
This work was partially conducted during a scholarship supported by the International Cooperation Program CAPES/COFECUB Foundation at the University of Lorraine; financed by CAPES -- Brazilian Federal Agency for Support and Evaluation of Graduate Education within the Ministry of Education of Brazil.}
\keywords{Minimal surface, Gauss map, conformal immersions, homogenous 3-manifolds}
\subjclass[2010]{Primary 53C42; Secondary 53A10, 53C30}

\begin{abstract}
In this work, we consider the model of $\S2R$ isometric to $\R^3\setminus \{0\}$,\break endowed with a metric conformally equivalent to the Euclidean metric of $\R^3$,
and we define a Gauss map for surfaces in this model likewise in the $3-$Euclidean space. We show as a main result that any two minimal conformal immersions in $\S2R$ with the same non-constant Gauss map differ by only two types of ambient isometries: either $f=(\id,T)$, where $T$ is a translation on $\R$, or $f=(\mathcal{A},T)$, where $\mathcal{A}$ denotes the antipodal map on $\s^2$. Moreover, if the Gauss map is singular, we show that it is necessarily constant, and then only vertical cylinders over geodesics of $\s^2$ in $\S2R$ appear with this assumption. We also study some particular cases, among them we prove that there is no minimal conformal immersion in $\S2R$ which the Gauss map is a non-constant anti-holomorphic map.
\end{abstract}

\maketitle

\section{Introduction}

A fundamental contribution to the theory of minimal surfaces in $\R^3$ is due to K. Weierstrass: Given $g:\Omega\subset\C\to\bar{\C}$ a meromorphic function and $\eta$ a holomorphic $1-$form defined on an open set $\Omega\subset\C$, such that whenever that $g$ has a pole of order $m$ at $z\in\Omega$ implies that $\eta$ has a zero of order $2m$ at $z\in\Omega$, the Weierstrass Representation states that
\[X(z)=X(z_0)+\re{\int_{z_0}^{z}\Big(\frac{1}{2}(1-g^2)\eta,\frac{i}{2}(1+g^2)\eta,g\eta\Big)}\]
define a conformal minimal surface in $\R^3$ with Gauss map $g$. In light of this, the relationship between the geometry of minimal surfaces and its Gauss map $g$ has been studied in the last century, and some Weierstrass-type Representations has been established for minimal surfaces in others $3-$dimensional ambient Riemannian manifolds.

In general, given an isometric (or conformal) immersion into some $3-$dimensional ambient Riemannian manifold, the first problem is to define a Gauss map of a such surface into the unit $2-$sphere $\s^2$, preserving some proprieties such as the Gauss map defined in $\R^3$, whenever possible. The second problem is how ``good'' is this definition, that is, if the Gauss map defined carries important informations about the surface. 
However, the harmonicity condition of this map is not satisfied generally for an arbitrary ambient manifold.

For simply connected $3-$homogeneous manifolds with a $4-$dimensional isometry group, i.e., a Riemannian fibration of bundle curvature $\tau$ over a $2-$space form $\mathbb{M}^2_\kappa $ with sectional curvature $\kappa$, satisfying $\kappa-4\tau^2\neq 0$, the topic of Gauss map theory is very active and several results were established in the last decade. For instance, by B. Daniel \cite{Daniel11} on the Heisenberg Group $\mathrm{Nil}_3$, by B. Daniel, I. Fernandéz and P. Mira \cite{DanielFernandezMira15} on $\psl2$, by I. Fernandéz and P. Mira \cite{FernandezMira07} on $\h^2\times\R$ and by M. Leite and J. Ripoll \cite{LeiteRipoll11} on $\S2R$ and $\h^2\times\R$.

W. Meeks and J. Pérez studied surfaces with constant mean curvature in $3-$
dimensional simply connected metric Lie groups in the survey \cite{MeeksPerez12}. Due to the Lie group structure, they presented a \emph{left invariant Gauss map} for an oriented surface in metric Lie groups. This map assumes values in the unit
$2-$sphere of the metric Lie algebra and it can be generalised to higher dimensions. In \cite{MeeksMiraPerezRos13}, W. Meeks, P. Mira, J. Pérez and A. Ros found a $2-$order equation satisfied by the \emph{left invariant Gauss Map}. They established, from this equation, a Weierstrass-type representation for surfaces with constant mean curvature in a $3-$dimensional metric Lie groups. However, since $\s^2\times\R$ is the only simply connected $3-$homogeneous Riemannian manifold that does not carry a structure of $3-$dimensional simply connected Lie group, with a left invariant metric, this ambient manifold was excluded from this list.

Motivated by the exceptional case of $\S2R$ not discussed in \cite{MeeksMiraPerezRos13,MeeksPerez12}, we define a new Gauss map for surfaces in $\S2R$. We use the model of $\S2R$ isometric to $\R^3\setminus \{0\}$, endowed with a metric conformally equivalent to the Euclidean metric of $\R^3$. Therefore, given a surface in $\S2R$ we define the Gauss map likewise in $\R^3$, identifying each unit $2-$sphere on each tangent plane to this surfaces with the unit $2-$sphere $\s^2\subset\R^3$.

The goal of this paper is to study minimal surfaces in $\S2R$ from this Gauss map.
More  specifically,  we  are  interested  to  know  when  a  such  minimal  immersion  is determined by its conformal structure and its Gauss map. As a main result, we prove that:

\begin{th-A}
Let $X:\Sigma\to\S2R$ be a minimal conformal immersion and $g$ be its non-constant Gauss map. If $\hat{X}:\Sigma\to\S2R$ is another minimal conformal immersion with the same Gauss map of $X$, then $\hat{X}=f\circ X$, with $f\in\iso{\S2R}$ given either by $f=(\id,T)$ or $f=(\mathcal{A},T)$, where $\mathcal{A}$ denotes the antipodal map on $\s^2$ and $T$ is a translation on $\R$.
\end{th-A}

Unlike what happens in $\R^3$, any deformation of a minimal conformal immersion preserving this Gauss map is in fact rigid, i.e., it is given by an isometry of $\S2R$. However, we show that the zeroes of the curvature $K$ of the metric induced by $X$ are the singular points of $g$, hence either $K=0$ or its zeroes are isolated, likewise what happens in $\R^3$.

Under hypothesis of anti-holomorphic non-constant Gauss map $g$, we prove that this condition is an obstruction to the existence of a minimal conformal immersion into $\S2R$. For instance:



\begin{prop-antiholo}
There is no minimal conformal immersion $X:\Sigma\to\S2R$ which its Gauss map $g$ is an anti-holomorphic non-constant map.
\end{prop-antiholo}



In the case of minimal conformal immersion into $\S2R$ with a singular Gauss map, we show that only vertical cylinders over geodesics of $\s^2$ in $\S2R$ appear, and then the Gauss map is necessarily constant.

\begin{prop-cylinder}
Let $X:\Sigma\to\S2R$ be a minimal conformal immersion and $g$ be its Gauss map. If $g$ is a singular map then $X(\Sigma)$ is part of a vertical cylinder over a geodesic of $\s^2$ in $\s^2\times\R$. In particular, $g$ is constant.
\end{prop-cylinder}

It is worthwhile to point out our definition of Gauss map does not coincide with the \emph{Twisted normal map} defined by M. Leite and J. Ripoll in \cite{LeiteRipoll11}. In fact, the \emph{Twisted normal map} assumes values into the unit $3-$sphere $\s^3$ and the harmonicity of this map is equivalent to affirm that the surface has constant mean curvature in $\s^2\times\R$. In our case, only totally geodesic $2-$spheres in $\s^2\times\R$ have non-constant harmonic Gauss map.

\begin{cor-g-harmonic}
Let $X:\Sigma\to\S2R$ be a minimal conformal immersion and $g$ be its Gauss map. If $g$ is a non-constant harmonic map then $X(\Sigma)$ is part of a totally geodesic $2-$sphere in $\s^2\times\R$.
\end{cor-g-harmonic}


\section{The model of \texorpdfstring{$\S2R$}{S2xR}}
Let $\s^2$ be the unit $2-$sphere in $\R^3$ endowed with the induced metric, and consider the Riemannian product manifold $\S2R$ endowed with the product metric (referred to as the standard model of $\S2R$). We denote by $\Pi_1:\S2R\to\s^2$ and $\Pi_2:\S2R\to\R$ the projections into the factors $\s^2$ and $\R$, respectively, given by $\Pi_1(y,t)=y$ and $\Pi_2(y,t)=t$.

We consider $\R^3\setminus\{0\}$ endowed with the metric
\[\dif \mu^2=\frac{1}{x_1^2+x_2^2+x_3^2}(\dif x_1^2+\dif x_2^2+\dif x_3^2)\]
and the smooth map $\phi:\S2R\to\R^3\setminus\{0\}$, defined by
$\phi(y,t)=e^ty$, with inverse map given by $\phi^{-1}(x)=(x/|x|,\log|x|)$, for $x=(x_1,x_2,x_3)\in\R^3\setminus\{0\}$. In addition, we define $\pi_j=\Pi_j\circ\phi^{-1}$ for $j=1,2$, i.e., 
\[x  \mapsto \pi_1(x)=\frac{x}{|x|} \text{ \ and \ } x\mapsto \pi_2(x)=\log|x|.\]
A straightforward computation shows that $\phi$ is a global isometry between $\S2R$ and $\R^3\setminus\{0\}$, therefore, we may consider $\S2R$ as $(\R^3\setminus\{0\},\dif \mu^2)$.

Let $\{\partial_{x_j}\}_{j=1,2,3}$ be the canonical orthonormal frame of $\R^3$ and we set $E_j(x)=|x|\partial_{x_j}$, for $x\in\S2R$ and $j=1,2,3$. The frame $\{E_j(x)\}_{j=1,2,3}$ is an orthonormal frame of $\S2R$. If $\bar{\nabla}$ denotes the Riemannian connection of $\S2R$,  by a standard computation we get
\[\everymath={\displaystyle}\begin{array}{ccc}
  \bar{\nabla}_{E_1}E_1=\frac{1}{|x|}(x_2E_2+x_3E_3), & \bar{\nabla}_{E_2}E_1=-\frac{x_1}{|x|}E_2, & \bar{\nabla}_{E_3}E_1=-\frac{x_1}{|x|}E_3,
\end{array}\]

\[\everymath={\displaystyle}\begin{array}{ccc}
  \bar{\nabla}_{E_1}E_2=-\frac{x_2}{|x|}E_1, & \bar{\nabla}_{E_2}E_2=\frac{1}{|x|}(x_1E_1+x_3E_3), & \bar{\nabla}_{E_3}E_2=-\frac{x_2}{|x|}E_3,
\end{array}\]

\[\everymath={\displaystyle}\begin{array}{ccc}
  \bar{\nabla}_{E_1}E_3=-\frac{x_3}{|x|}E_1, & \bar{\nabla}_{E_2}E_3=-\frac{x_3}{|x|}E_2, & \bar{\nabla}_{E_3}E_3=\frac{1}{|x|}(x_1E_1+x_2E_2).
\end{array}\]

Given a vector $V=v_1\partial_{x_1}+v_2\partial_{x_2}+v_3\partial_{x_3}$ tangent at the point $x=(x_1,x_2,x_3)$ of $\S2R$, we will use brackets to express the coordinates of $V$ in the frame $\{E_j(x)\}_{j=1,2,3}$, that is,
\[v_1\partial_{x_1}+v_2\partial_{x_2}+v_3\partial_{x_3}=\frac{1}{|x|}
                                          \begin{bmatrix}
                                            v_1 \\
                                            v_2 \\
                                            v_3 \\
                                          \end{bmatrix}.\]

The isometry group of $\S2R$ is a $4-$dimensional group isomorphic to $\iso{\s^2}\times\iso{\R}$. In particular, it has $4$ connected components: a given isometry can either preserves or reverses the orientation in each factor $\s^2$ and $\R$. Consider $f=(M,T)\in\iso{\s^2\times\R}$ an ambient isometry. If $f$ preserves the orientation of $\R$, i.e., when $T$ is given by $T(t)=s+t$, then $\phi^{-1}\circ f\circ\phi$ is the correspondent isometry of $f$ in our model of $\s^2\times\R$, given by
\[x\in\R^3\setminus\{0\}\mapsto e^{s}M(x),\]
with $s\in\R$ and $M\in O(3)$, where $O(3)$ denotes the $3-$dimensional Orthogonal Group. If $f$ reverses the orientation of $\R$, i.e., when $T$ is given by $T(t)=s-t$, then the correspondent isometry of $f$ in our model of $\s^2\times\R$ is given by
\[x\in\R^3\setminus\{0\}\mapsto \frac{e^{s}}{|x|^2} M(x),\]
with $s\in\R$ and $M\in O(3)$. In order to simplify our notation, given an isometry $f\in\iso{\s^2\times\R}$, we identify $f$ and its correspondent $\phi^{-1}\circ f\circ\phi$ in our model of $\S2R$.

We say that an isometry $f\in\iso{\S2R}$ is a vertical translation of $\S2R$ when it is the identity on the factor $\s^2$ and it preserves the orientation of $\R$, that is, $f=(\id,T)$ where $T(t)=s+t$ is a translation on $\R$, with $s\in\R$ fixed.

\section{The Gauss map}

In this section, we identify
\[\S2R \text{ \ with \ }(\R^3\setminus\{0\},\dif \mu^2).\]

In our model of $\S2R$, we consider $\Pi:\S2R\to\C$ the projection into $\R^2$ (identified with $\C$) and $\pi:\S2R\to\R$ the projection into $\R$, respectively, given by  $\Pi(x_1,x_2,x_3)=x_1+ix_2$ and $\pi(x_1,x_2,x_3)=x_3$.

Let $\Sigma$ be an oriented Riemmanian surface and $z=u+iv$ be a conformal coordinate on $\Sigma$ and consider $X:\Sigma\to\S2R$ a conformal immersion. We define the projections of $X$ on $\C$ and $\R$ by $F=\Pi\circ X: \Sigma\to\C$ and $h=\pi\circ X:\Sigma\to\R$, and then the conformal immersion $X$ is written as $X=(F,h)$.
We denote by $N:\Sigma\to \textrm{U}(\S2R)$ the unit normal field to $\Sigma$, where $\textrm{U}(\S2R)$ is the unit tangent bundle to $\S2R$.

Since $\{E_j(x)\}_{j=1,2,3}$ is a global orthonormal frame of  $\S2R$, then for each $x\in\S2R$, we identify the unit $2-$sphere of $T_{x}(\S2R)$  with the unit $2-$sphere $\s^2\subset \R^3$, i.e., $v_1E_1+v_2E_2+v_3E_3 \in\s^2\subset T_x(\S2R)$
is identified with $v_1\partial_{x_1}+v_2\partial_{x_2}+v_3\partial_{x_3}\in\s^2\subset\R^3$.
Therefore, we may consider the unit normal vector $N$ into the $2-$sphere $\s^2\subset\R^3$ up to this identification.

\begin{definition}The Gauss map of $X$ is the map $g=\varphi\circ N:\Sigma\to\bar{\C}$, where $\varphi: \s^2\to\bar{\C}$ is the stereographic projection with respect to the southern pole, that is, if $N=N_1E_1+N_2E_2+N_3E_3$ then
\[g=\frac{N_1+iN_2}{1+N_3}:\Sigma\to\bar{\C}\]
satisfying
\[N=\frac{1}{1+|g|^2}\begin{bmatrix}
                       g+\bar{g}\\
                       i(\bar{g}-g)\\
                       1-|g|^2
                     \end{bmatrix}.\]
\end{definition}

\begin{remark}
For the other choice of $N$, we replace $g$ by $\widetilde{g}=-1/\bar{g}$.
\end{remark}

\begin{remark}\label{isometries}
Let $f=(M,T)\in\iso{\S2R}$, where $T$ is a translation by $s$ on $\R$, and consider the conformal immersion $\hat{X}=f\circ X$. Then $\hat{N}=(\det M)M\circ N$. In fact, since $f:\S2R\to\S2R$, given by $f(x)=e^sM(x)$, is a linear map then, for $V\in \mathrm{T}_x(\S2R)$, we have $\dif f_x (V) = e^s M(V)$ and the unit normal $\hat{N}$ is given by
\[\hat{N} = \frac{\dif f_{X(z)}(X_u)\times\dif f_{X(z)}(X_v)}{\|\dif f_{X(z)}(X_u)\times\dif f_{X(z)}(X_v)\|}. \]
However,
\[\dif f_{X(z)}(X_u)\times\dif f_{X(z)}(X_v) = e^{2s}(\det M)M(X_u\times X_v)\]
and
\[\|\dif f_{X(z)}(X_u)\times\dif f_{X(z)}(X_v)\| = e^{2s}\|X_u\times X_v\|,\]
once $M\in O(3)$, thus $\hat{N}=(\det M)M\circ N$. Moreover, since $(\det M)M\in SO(3)$, there is $R_M\in SU(2)$ such that $(\det{M})M(z_1)=z_2$ implies $R_M(\varphi(z_1))=\varphi(z_2)$, for $z_1,z_2\in \s^2$,  where $SU(2)$ denotes the Special Unitary Group of degree $2$. Consequently, we get that $\hat{g}=R_M\circ g$. In particular, when $M$ is the antipodal map $\mathcal{A}$, then $\hat{N}=N$ and hence $\hat{g}=g$.
\end{remark}

In the following computations, we adapt the method used by Daniel in \cite{Daniel11} to study the Gauss map of surfaces in the Heisenberg group $\mathrm{Nil}_3$. For sake of clarity, we maintain the Daniel's notations.

We consider the complex function $\eta = 2h_z$. For a conformal immersion $X$, we have
\[X_z=\frac{1}{2|X|}\begin{bmatrix}
    (F+\bar{F})_z \\
    i(\bar{F}-F)_z \\
    \eta
  \end{bmatrix}
  \text{ \ and \ }
  X_{\bar{z}}=\frac{1}{2|X|}\begin{bmatrix}
    (F+\bar{F})_{\bar{z}} \\
    i(\bar{F}-F)_{\bar{z}} \\
    \bar{\eta}
  \end{bmatrix},\]
where $|X|=\sqrt{|F|^2+h^2}$. We note that $X$ is a conformal immersion if, and only if, $|X_z|=0$, that is,
\begin{equation}\label{eq-conformality}
F_z\bar{F}_z=-\frac{\eta^2}{4}.
\end{equation}

We compute
\[X_z\times X_{\bar{z}}=\frac{i}{2|X|^2}\begin{bmatrix}
                                          \re(\eta F_{\bar{z}}-\bar{\eta}F_z) \\
                                          \im(\eta F_{\bar{z}}-\bar{\eta}F_z) \\
                                          |F_z|^2-|F_{\bar{z}}|^2
                                        \end{bmatrix}\]
and we get
\[X_u\times X_v=-2iX_z\times X_{\bar{z}}=\frac{1}{|X|^2}\begin{bmatrix}
                                          \re(\eta F_{\bar{z}}-\bar{\eta}F_z) \\
                                          \im(\eta F_{\bar{z}}-\bar{\eta}F_z) \\
                                          |F_z|^2-|F_{\bar{z}}|^2
                                        \end{bmatrix}.\]
                                        
We note that $g=0$ if, and only if, $N$ is the northern pole, equivalently
\begin{align*}
  \eta F_{\bar{z}}-\bar{\eta}F_z &= 0, \\
  |F_z|^2-|F_{\bar{z}}|^2 &> 0.
\end{align*}
Then by equation \eqref{eq-conformality} we get $\bar{\eta}(|F_z|^2+|\eta|^2/4)=0$ that implies $\eta =0$, since $|F_z|^2>0$. Again, by \eqref{eq-conformality} we get $F_z F_{\bar{z}}=0$, that is, $F_{\bar{z}}=0$. Analogously, $g=\infty$ if, and only if, $N$ is the southern pole, equivalently, $\eta =0$ and $F_z=0$. Therefore $g=0$ or $\infty$ if, and only if, $\eta = 0$. Moreover, $g=0$ if, and only if, $F_{\bar{z}}=0$ and $g=\infty$ if, and only if, $F_z=0$.

We compute
\[\|X_u\times X_v\|=2\langle X_z,X_{\bar{z}}\rangle=\frac{1}{|X|^2}\Big(|F_z|^2+|F_{\bar{z}}|^2+\frac{1}{2}|\eta|^2\Big).\]

Then, when $g\neq \infty$, we get
\[\frac{2g}{1+|g|^2}=\frac{\eta F_{\bar{z}}-\bar{\eta}F_z}{|F_z|^2+|F_{\bar{z}}|^2+\frac{1}{2}|\eta|^2}\text{ \ and \ }
\frac{1-|g|^2}{1+|g|^2}=\frac{|F_z|^2-|F_{\bar{z}}|^2}{|F_z|^2+|F_{\bar{z}}|^2+\frac{1}{2}|\eta|^2},\]
therefore
\[g=\frac{\eta F_{\bar{z}}-\bar{\eta}F_z}{2|F_z|^2+\frac{1}{2}|\eta|^2}.\]

Using \eqref{eq-conformality} in the expression above, we get
\begin{equation}\label{eq-F}
\bar{g}F_z=-\frac{\eta}{2}\text{ \ and \ }F_{\bar{z}} = \frac{g\bar{\eta}}{2},
\end{equation}
which implies that the functions $\eta/\bar{g}$ and $g\bar{\eta}$ can be extended smoothly to the points where $g=0$ or $\infty$. Furthermore, at the points where $g\neq0$ or $\infty$, the metric induced by $X$ is given by
\begin{align}\label{eq-inducedmetric1}
\dif s^2 &= \frac{(1+|g|^2)^2|\eta|^2}{4|g|^2|X|^2}|\dif z|^2.
\end{align}

\section{Minimal immersions into \texorpdfstring{$\S2R$}{S2xR}}

We devote this section to study minimal surfaces in $\S2R$ from the Gauss map defined in the previous section. We recall that, unless otherwise stated, we identify
\[\S2R \text{ \ with \ }(\R^3\setminus\{0\},\dif \mu^2).\]

From now on, we suppose that $X:\Sigma\to\S2R$ is a minimal conformal immersion. As a first result, we translate the minimality condition of $X$ in terms of its Gauss map $g$ and its complex function $\eta$.

\begin{lemma}\label{lemma-g-eta}
Let $X:\Sigma\to\S2R$ be a minimal conformal immersion. Then the Gauss map $g$ and the complex function $\eta$ satisfy
\begin{equation}\label{eta}
\frac{1-|g|^2}{1+|g|^2}(\log \bar{g})_{\bar{z}}=(\log \eta)_{\bar{z}}
\end{equation}
when $g\neq \infty$.
\end{lemma}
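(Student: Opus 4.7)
My approach is to recast minimality of $X$ as the vanishing of the normal component of $\bar\nabla_{X_z}X_{\bar z}$, translate it to an equation in $g,\eta$ via \eqref{eq-F}, and close the argument using the automatic integrability $F_{z\bar z}=F_{\bar z z}$.

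Because $\dif \mu^2=|x|^{-2}\bigl(\dif x_1^2+\dif x_2^2+\dif x_3^2\bigr)$ is conformally flat, the Koszul formula for a conformal change of metric (equivalently, a direct computation using the Christoffel data of Section~2) yields
\[\bar\nabla_V W \;=\; D_V W \;-\; \langle W,\vec P\rangle\,V \;+\; \langle V,W\rangle\,\vec P,\]
where $\vec P=\sum_j x_j\partial_{x_j}=\sum_j\omega_jE_j$ (with $\omega_j=x_j/|X|$) is the position vector (of $\dif \mu^2$-unit norm), and $D_V W=\sum_j V(W^j)E_j$ denotes componentwise differentiation of the $E_j$-frame coefficients. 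Since $X$ is conformal, minimality is equivalent to $\langle\bar\nabla_{X_z}X_{\bar z},N\rangle=0$. The term involving $\langle X_z,N\rangle=0$ drops, leaving
\[\langle D_{X_z}X_{\bar z},N\rangle\;+\;\langle X_z,X_{\bar z}\rangle\,\langle\vec P,N\rangle\;=\;0.\]

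Setting $u:=\eta\bar g_{\bar z}/\bar g$ and $v:=\bar g g_z\bar\eta$, and expanding both terms using \eqref{eq-F}, the Gauss-map formula for $N$, and the fact that $\eta_{\bar z}=2h_{z\bar z}\in\R$, a direct calculation shows that the first term equals $\tfrac{1}{|X|}\bigl(-\tfrac12\eta_{\bar z}+\tfrac{\mathrm{Re}(u)}{1+|g|^2}\bigr)$, while the second equals $\tfrac{|\eta|^2(1+|g|^2)\Psi}{8|g|^2|X|^3}$, where $\Psi:=2\,\mathrm{Re}(\bar g F)+h(1-|g|^2)$. The minimality equation thus apparently still drags $F$ and $h$ into play. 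However, equating mixed partials in \eqref{eq-F} yields, together with $\bar\eta_z=\eta_{\bar z}$, the integrability identity $u-v=(1+|g|^2)\eta_{\bar z}$. Using this to eliminate $\eta_{\bar z}$ in the bracketed expression above and noting that $v+\bar u=\bar\eta g_z(1+|g|^2)/g$, the first term simplifies to $\bar u/(2|X|)$. Minimality then reads
\[\bar u\;=\;-\,\frac{|\eta|^2(1+|g|^2)\Psi}{4|g|^2|X|^2},\]
whose right-hand side is manifestly real, so $u=\eta\bar g_{\bar z}/\bar g$ is real.

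Once $u\in\R$, one has $v=\bar g g_z\bar\eta=|g|^2\bar u=|g|^2 u$, which plugged into the integrability identity gives $(1-|g|^2)u=(1+|g|^2)\eta_{\bar z}$. Dividing through by $\eta$ (nonzero wherever $g\neq 0,\infty$) yields the asserted relation $\tfrac{1-|g|^2}{1+|g|^2}(\log\bar g)_{\bar z}=(\log\eta)_{\bar z}$. The main obstacle is the reduction in paragraph two: minimality a priori explicitly involves $F$ and $h$ through $\Psi$, and only after feeding the integrability identity back into the minimality equation does the $F,h$-dependence collapse, leaving the single real scalar condition that $\eta(\log\bar g)_{\bar z}$ is real, from which the desired complex identity then follows algebraically.
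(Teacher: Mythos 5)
Your argument is correct, and it takes a genuinely different route from the paper's. The paper imposes the full vector equation $\bar{\nabla}_{X_{\bar z}}X_z=0$, expands it in the frame $\{E_j\}$ using the Christoffel data of Section 2, and splits it into a vertical and a horizontal part; this produces the pair of equations \eqref{eqminimality-1}--\eqref{eqminimality-2}, both containing the real quantity $\Omega=F\bar g+\bar F g+h(1-|g|^2)$ (your $\Psi$), and the lemma follows by eliminating $\Omega$ between them. You instead exploit the conformal flatness of $\dif\mu^2$ to write the connection as $D_VW-\langle W,\vec P\rangle V+\langle V,W\rangle\vec P$ and project only onto $N$, which isolates the single real scalar condition that minimality actually adds for a conformal immersion; your resulting equation $\bar u=-|\eta|^2(1+|g|^2)\Psi/(4|g|^2|X|^2)$ is exactly the paper's horizontal equation \eqref{eqminimality-2} in disguise, and its real content is precisely that $u=\eta(\log\bar g)_{\bar z}$ is real. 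The paper's vertical equation \eqref{eqminimality-1} is then not needed as an independent consequence of minimality: you recover it from the integrability identity $u-v=(1+|g|^2)\eta_{\bar z}$, which follows from $F_{z\bar z}=F_{\bar z z}$ applied to \eqref{eq-F} and hence from conformality alone. I checked the two asserted ``direct calculations'': $\langle X_z,X_{\bar z}\rangle\langle\vec P,N\rangle=|\eta|^2(1+|g|^2)\Psi/(8|g|^2|X|^3)$ and $\langle D_{X_z}X_{\bar z},N\rangle=\tfrac{1}{|X|}\bigl(-\tfrac12\eta_{\bar z}+\tfrac{\mathrm{Re}(u)}{1+|g|^2}\bigr)$ are both right (the latter in fact equals $\bar u/(2|X|)$ identically once one notes that the term proportional to $\langle X_{\bar z},N\rangle$ drops), and the final algebra yielding \eqref{eta} is sound. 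Your version is shorter and cleanly separates what conformality already forces from what minimality contributes; the paper's longer computation has the side benefit of exhibiting $\Omega$ explicitly, which it reuses later (for instance, the condition $\Omega=0$ reappears in the proof of Proposition \ref{anti-holomorphic}).
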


\begin{proof}
We consider $U\subset\Sigma$ an open set which $g\neq 0$. Firstly, note that by equations \eqref{eq-F}, we can write
\[X_z=\frac{\eta}{4|X|}\begin{bmatrix}\everymath={\displaystyle}
    \bar{g}-\bar{g}^{-1} \\
    i(\bar{g}+\bar{g}^{-1}) \\
    2
  \end{bmatrix}
  \text{ \ and \ }
  X_{\bar{z}}=\frac{\bar{\eta}}{4|X|}\begin{bmatrix}\everymath={\displaystyle}
    g-g^{-1} \\
    i(g+g^{-1}) \\
    2
  \end{bmatrix}.\]
Therefore, setting
\[A_1 = \frac{\eta}{4|X|}\Big(\bar{g}-\frac{1}{\bar{g}}\Big), \ A_2 = \frac{\eta}{4|X|}\Big(\bar{g}+\frac{1}{\bar{g}}\Big)
 \ \text{ and } \ A_3=\frac{\eta}{2|X|},\]
we have that $\displaystyle X_z = \sum_{j=1}^{3} A_jE_j$ and $\displaystyle X_{\bar{z}} = \sum_{j=1}^{3}\bar{A}_j E_j$.

Given $X:\Sigma\to\S2R$ a conformal immersion, a necessary and sufficient condition for $X$ to be minimal is
\[\bar{\nabla}_{X_{\bar{z}}}X_z=0,\]
that is,
\begin{equation}\label{minimality-condition}
\sum_{k=1}^{3}(A_k)_{\bar{z}}E_k+\sum_{\ell,j=1}^{3} \bar{A}_{\ell} A_j \bar{\nabla}_{E_{\ell}}E_j = 0.
\end{equation}

A straightforward computation gives:
\begin{multline*}
(A_1)_{\bar{z}} = \frac{1}{16|X|^3|g|^2\bar{g}}\Big\{4|X|^2|g|^2(\bar{g}^2-1)\eta_{\bar{z}}+4|X|^2(\bar{g}^2+1)g\eta\bar{g}_{\bar{z}} \\
  -|\eta|^2(\bar{g}^2-1)(|g|^2g\bar{F}-\bar{g}F+2h|g|^2)\Big\},
\end{multline*}
\begin{multline*}
(A_2)_{\bar{z}} = \frac{i}{16|X|^3|g|^2\bar{g}}\Big\{4|X|^2|g|^2(\bar{g}^2+1)\eta_{\bar{z}}+4|X|^2(\bar{g}^2-1)g\eta\bar{g}_{\bar{z}} \\
  -|\eta|^2(\bar{g}^2+1)(|g|^2g\bar{F}-\bar{g}F+2h|g|^2)\Big\}
\end{multline*}
and
\begin{equation*}
(A_3)_{\bar{z}} = \frac{1}{8|X|^3g}\Big\{4|X|^2g\eta_{\bar{z}}-|\eta|^2(g^2\bar{F}-F+2hg)\Big\}.
\end{equation*}
Thus
\begin{multline*}
\sum_{j,k} \bar{A}_j A_k \bar{\nabla}_{E_j}E_k =\\ =\frac{|\eta|^2}{16|X|^3|g|^2}\Big\{(|g|^4+g^2)\bar{F}+(\bar{g}^2+1)F-2h\bar{g}(g^2-1)+4|g|^2\re{F}\Big\}E_1 \\
+\frac{|\eta|^2}{16|X|^3|g|^2}\Big\{i(|g|^4-g^2)\bar{F}+i(\bar{g}^2-1)F+2ih\bar{g}(g^2+1)+4|g|^2\im{F}\Big\}E_2\\
+\frac{|\eta|^2}{8|X|^3|g|^2}\Big\{(1+|g|^4)h-|g|^2\bar{g}F+g\bar{F}\Big\}E_3.
\end{multline*}

The vertical part of \eqref{minimality-condition} is equal to zero if, and only if,
\[4|X|^2|g|^2\eta_{\bar{z}}-|\eta|^2(|g|^2g\bar{F}-\bar{g}F+2|g|^2h)+|\eta|^2((1+|g|^4)h-|g|^2\bar{g}F+g\bar{F})=0,\]
i.e.,
\begin{equation*}
  4|X|^2|g|^2\bar{\eta}_{z}=-|\eta|^2(1-|g|^2)\Omega,
\end{equation*}
where $\Omega = F\bar{g}+\bar{F}g+h(1-|g|^2)\in \R$.

The horizontal part of \eqref{minimality-condition} is equal to zero if, and only if,
\begin{multline*}
-4|X|^2|g|^2\eta_{\bar{z}}+4|X|^2\eta g\bar{g}_{\bar{z}}+|\eta|^2(|g|^2g\bar{F}-\bar{g}F+2h|g|^2) \\
  +|\eta|^2(|g|^2g\bar{F}+\bar{g}F-2|g|^4h+2|g|^2\bar{g}F)=0,
\end{multline*}
i.e.,
\[4|X|^2\eta g\bar{g}_{\bar{z}}-4|X|^2|g|^2\eta_{\bar{z}}+2|\eta|^2|g|^2\Omega=0.\]
Using \eqref{eqminimality-1} in the equation above we get
\begin{equation*}
  4|X|^2g\bar{g}_{\bar{z}} = -\bar{\eta}(1+|g|^2)\Omega.
\end{equation*}

By continuity, equations \eqref{eqminimality-1} and \eqref{eqminimality-2} also hold on a neighbourhood of a point which $g=0$. Therefore,
\begin{align}
4|X|^2g\bar{g}_{\bar{z}}
&= -\bar{\eta}(1+|g|^2)\Omega \label{eqminimality-2},\\
4|X|^2|g|^2\bar{\eta}_{z} &= -|\eta|^2(1-|g|^2)\Omega\label{eqminimality-1}
\end{align}
are necessary and sufficient conditions for $X$ to be minimal, when $g\neq \infty$.

Suppose that $X$ is a minimal conformal immersion. At a point which $g\neq 0$, we have that $\eta \neq 0$, then by \eqref{eqminimality-2} and \eqref{eqminimality-1} we have
\[-\frac{\bar{\eta}\Omega}{4|X|^2g} = \frac{\bar{g}_{\bar{z}}}{1+|g|^2}\text{ \ and \ }-\frac{\bar{\eta}(1-|g|^2)\Omega}{4|X|^2g}=\frac{\bar{g}\bar{\eta}_z}{\eta}.\]
Multiplying the first equation by $(1-|g|^2)$ and substituting it in the second one, we get our assertion. Again by continuity, \eqref{lemma-g-eta} holds at a point where $g=0$.
\end{proof}

\begin{example}[Horizontal surfaces and vertical cylinders]
The simplest examples of minimal surfaces in $\s^2\times\R$ are the horizontal surfaces and the vertical cylinders over geodesics of $\s^2$.

The totally geodesic $2-$spheres $\s^2\times\{a\}$ in the standard model of $\s^2\times\R$, for $a\in\R$, are $2-$spheres of radius $e^a$ centered at the origin of $\R^3$ in our model of $\S2R$.

The vertical cylinders $\gamma\times\R$ over geodesics of $\s^2$ in the standard model of $\s^2\times\R$ are planes passing through the origin of $\R^3$ in our model of $\S2R$. 
\end{example}

\begin{proposition}\label{cylinders2}
Let $X:\Sigma\to\S2R$ be a minimal conformal immersion and $g$ be its Gauss map. Then $g$ is constant if, and only if, $X(\Sigma)$ is part of a vertical cylinder over a geodesic of $\s^2$ in  $\s^2\times\R$.
\end{proposition}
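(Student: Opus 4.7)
The plan is to prove the two implications separately, leveraging the minimality identity from the proof of Lemma \ref{lemma-g-eta},
\[4|X|^{2}\, g\,\bar g_{\bar z}\;=\;-\bar\eta\,(1+|g|^2)\,\Omega,\qquad\Omega=F\bar g+\bar Fg+h(1-|g|^2)\in\R,\]
together with the observation from the preceding example that vertical cylinders over geodesics of $\s^2$ correspond, in our model, precisely to planes through the origin of $\R^3$.

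For the implication \emph{cylinder $\Rightarrow$ $g$ constant}, suppose $X(\Sigma)$ lies in a plane $P$ through the origin. Then $P$ has a constant Euclidean unit normal $\nu$, and since $E_j=|x|\,\partial_{x_j}$, the components of the $\dif\mu^2$-unit normal $N$ in the frame $\{E_j\}$ coincide with the Euclidean components of $\nu$. Hence $N$, and therefore $g$, is constant along $X(\Sigma)$.

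For the converse, assume $g\equiv g_0$ is constant, and consider first the case $g_0\notin\{0,\infty\}$. Then $\bar g_{\bar z}\equiv 0$, so the identity above reduces to $\bar\eta\,\Omega\equiv 0$. By \eqref{eq-F}, $\eta\equiv 0$ on an open set would force $F_z=F_{\bar z}=0$ and $h_z=0$, making $X$ locally constant and contradicting immersivity; therefore by continuity $\Omega\equiv 0$ on $\Sigma$. Writing $\Omega$ in terms of $X=(x_1,x_2,x_3)$ yields the linear equation
\[2x_1\re(g_0)+2x_2\im(g_0)+x_3(1-|g_0|^2)=0,\]
so $X(\Sigma)$ lies in a plane through the origin, which by the example is a vertical cylinder over a geodesic of $\s^2$.

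The main obstacle is the degenerate sub-cases $g_0\in\{0,\infty\}$: here \eqref{eq-F} already forces $\eta\equiv 0$, so both minimality identities \eqref{eqminimality-2} and \eqref{eqminimality-1} reduce to $0=0$ and yield no direct information on $X$. I would handle these geometrically: if $g_0=0$, the Euclidean unit normal is constantly $\partial_{x_3}$, so $X(\Sigma)$ is contained in a horizontal plane $\{x_3=h_0\}$; a direct computation of the $E_3$-component of $\bar\nabla_{X_{\bar z}}X_z$ (equivalently, the conformal-change formula for the mean curvature on $(\R^3\setminus\{0\},\dif\mu^2)$) shows that such a plane has mean curvature proportional to $h_0/|x|$, forcing $h_0=0$ and hence that the plane passes through the origin. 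The case $g_0=\infty$ then reduces to $g_0=0$ by reversing the orientation of $\Sigma$, which replaces $N$ by $-N$ and sends $g$ to $-1/\bar g$.
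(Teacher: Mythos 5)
Your proof is correct, but it follows a genuinely different route from the paper's for the nontrivial implication. The paper argues geometrically: constancy of $g$ means the Euclidean Gauss map is constant, so $X(\Sigma)$ lies in a plane $P$, and a plane missing the origin is ruled out by the Tangency Principle applied to a sphere $\s^2\times\{a\}$ (a totally geodesic, hence minimal, sphere centered at the origin) tangent to $P$. You instead extract the conclusion analytically from the minimality identity \eqref{eqminimality-2}: with $\bar g_{\bar z}=0$ and $\eta$ not vanishing on any open set, you get $\Omega\equiv 0$, which is exactly the equation of a plane through the origin with normal proportional to $N$. Each approach has its advantages: the paper's is shorter and uniform in $g_0$ (no case distinction), though it leans on the maximum principle and, as stated, is a little loose about whether the tangency point actually lies on $X(\Sigma)$; yours is self-contained and produces the plane equation explicitly, at the cost of having to treat $g_0\in\{0,\infty\}$ separately --- which you correctly identify as necessary, since there \eqref{eq-F} forces $\eta\equiv 0$ and \eqref{eqminimality-2} becomes vacuous --- and your resolution of those cases via the conformal transformation of the mean curvature (showing $\{x_3=h_0\}$ has mean curvature proportional to $h_0/|x|$) is sound. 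Your forward implication coincides with the paper's one-line argument.
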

\begin{proof}
Suppose that $g$ is constant. Then the normal vector $N$ is also constant on $\Sigma$, so $X(\Sigma)$ is contained in a plane $P$ of $\R^3$. If $P$ does not pass through the origin of $\R^3$, there is a $2-$sphere centered at the origin of $\R^3$ which it is tangent to $P$ at a point $z_0$. However, by the Tangency Principle, $P$ must coincide with this $2-$sphere on a neighbourhood of $z_0$, that is absurd. Therefore, $P$ passes through the origin of $\R^3$ and then $X(\Sigma)$ is part of a vertical cylinder over a geodesic of $\s^2$ in $\s^2\times\R$.

Conversely, since vertical cylinders over geodesics of $\s^2$ in $\s^2\times\R$ are planes in our model of $\s^2\times\R$, then the Gauss map $g$ must be constant.
\end{proof}

Given $X:\Sigma\to\S2R$ a minimal conformal immersion, we consider the smooth maps $p:\Sigma\to\bar{\C}$ and $r:\Sigma\to\R$ defined by $p(z)=\varphi\circ\pi_1\circ X(z)$ and $r(z)=\pi_2\circ X(z)$, respectively, that is,
\[z  \mapsto p(z)=\frac{F(z)}{|X(z)|+h(z)} \text{ \ and \ } z\mapsto r(z)=\log|X(z)|.\]

We observe that $p=0$ or $\infty$ if, and only if, $F=0$. Then $p$
cannot be identically equal to $0$ or $\infty$ on an open set, since $X$ is an immersion. Moreover, $p$ is a harmonic map into $(\bar{\C},4/(1+|w|^2)^2|\dif w|^2)$ and $r$ is a harmonic function on $\R$. Indeed, if $X$ is minimal then $\pi_j\circ X$ is a harmonic map, for $j=1,2$. Since the stereographic projection $\varphi$ is an isometry between $\s^2$ and $\bar{\C}$ endowed with the metric $4/(1+|w|^2)^2|\dif w|^2$, we have that
$\varphi\circ\pi_1\circ X=p$ and $\pi_2\circ X = r$ are harmonic maps.

\begin{proposition} Let $X:\Sigma\to\S2R$ be a minimal conformal immersion. Let $g:\Sigma\to\bar{\C}$  be the Gauss map of $X$ and suppose that $g\neq \infty$. Then the following equations hold:
  \begin{align}
     & (1+|p|^2)p_{z\bar{z}}-2\bar{p}p_zp_{\bar{z}}=0, \label{harmonicity-p}\\
     & (\bar{g}-\bar{p})^2p_z+(1+\bar{g}p)^2\bar{p}_z=0,\label{equation-p-1} \\
     & (1+|p|^2)^2g_z = (1+g\bar{p})^2p_z+(g-p)^2\bar{p}_z.\label{equation-p-2}
  \end{align}
These equations hold when $p\neq \infty$.
\end{proposition}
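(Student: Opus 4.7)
The first equation is essentially free: the paragraph just before the statement observes that $p=\varphi\circ\pi_1\circ X$ is harmonic into $(\bar{\C},4(1+|w|^2)^{-2}|\dif w|^2)$ (because $\varphi$ is an isometry and the minimality of $X$ makes $\pi_1\circ X$ harmonic into $\s^2$), and the Euler--Lagrange equation for harmonic maps into this target is precisely $(1+|p|^2)p_{z\bar z}-2\bar{p}\,p_zp_{\bar z}=0$. So \eqref{harmonicity-p} requires no further argument.

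For the other two identities I would work in the coordinates $(p,r)$, where $r=\pi_2\circ X=\log|X|$. From $p=F/(|X|+h)$ together with $|X|^2=|F|^2+h^2$ one obtains $F=2e^rp/(1+|p|^2)$ and $h=e^r(1-|p|^2)/(1+|p|^2)$, so $F_z,F_{\bar z},h_z,h_{\bar z}$ become explicit in $p,\bar{p},r$ and their first derivatives. Using $\bar{g}=-h_z/F_z$ (a restatement of \eqref{eq-F}), short algebra yields
\[
\bar{g}-\bar{p}=\frac{-e^r[\,r_z(1+|p|^2)-2p\bar{p}_z\,]}{(1+|p|^2)F_z},\quad 1+\bar{g}p=\frac{e^r[\,r_zp(1+|p|^2)+2p_z\,]}{(1+|p|^2)F_z}.
\]
Substituting into the left-hand side of \eqref{equation-p-1} and expanding, the cross-terms cancel and the whole expression factors as
\[
\frac{e^{2r}(p_z+p^2\bar{p}_z)}{(1+|p|^2)^2F_z^2}\Bigl[(1+|p|^2)^2r_z^2+4p_z\bar{p}_z\Bigr].
\]
The bracket is the conformality relation $F_z\bar{F}_z=-h_z^2$ translated into $(p,r)$-coordinates (substituting the above formulas for $F,h$ into $F_z\bar{F}_z+h_z^2$ reduces the result to this single combination), so it vanishes and \eqref{equation-p-1} follows.

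For \eqref{equation-p-2} I would exploit a Weierstrass-type factorization supplied by the conformality. Its complex conjugate is $(1+|p|^2)^2r_{\bar z}^2+4p_{\bar z}\bar{p}_{\bar z}=0$, so picking local square roots $X=\sqrt{p_{\bar z}}$, $Y=\sqrt{\bar{p}_{\bar z}}$ with signs arranged so that $(1+|p|^2)r_{\bar z}=2iXY$, direct substitution gives $F_{\bar z}=2e^r(X+ipY)^2/(1+|p|^2)^2$ and $h_{\bar z}=2e^r(X+ipY)(iY-\bar{p}X)/(1+|p|^2)^2$, hence $g=(X+ipY)/(iY-\bar{p}X)$. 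Straightforward algebra then yields $g-p=(1+|p|^2)X/(iY-\bar{p}X)$ and $1+g\bar{p}=i(1+|p|^2)Y/(iY-\bar{p}X)$, so the right-hand side of \eqref{equation-p-2} reduces to $(1+|p|^2)^2(|p_{\bar z}|^2-|p_z|^2)/(iY-\bar{p}X)^2$. It then suffices to check that $g_z=(|p_{\bar z}|^2-|p_z|^2)/(iY-\bar{p}X)^2$: writing $g=N/D$ with $N=X+ipY$, $D=iY-\bar{p}X$ and differentiating, one uses the harmonic-map equation for $p$ in the form $X_z=\bar{p}\,p_zX/(1+|p|^2)$ and $Y_z=p\bar{p}_zY/(1+|p|^2)$; when $N_zD-ND_z$ is expanded the imaginary cross-terms cancel identically and the surviving piece is exactly $|p_{\bar z}|^2-|p_z|^2$. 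I expect this factorization step to be the main obstacle: the square-root sign ambiguity has to be managed (the final identity is sign-invariant because flipping both signs flips $N$ and $D$ simultaneously), and the delicate cancellation of imaginary cross-terms in $N_zD-ND_z$ is precisely where the harmonicity of $p$ must be inserted; at degenerate points where the square roots break down, the identity extends by continuity.
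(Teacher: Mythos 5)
Your proposal is correct, but it reaches \eqref{equation-p-1} and \eqref{equation-p-2} by a genuinely different route than the paper. The paper works forward from $p=F/(|X|+h)$: using \eqref{eq-F} it computes $p_z$ and $\bar p_z$ directly and packages them as $4|X|\bar g\,p_z=-\eta(1+\bar gp)^2$ and $4|X|\bar g\,\bar p_z=\eta(\bar g-\bar p)^2$ (equations \eqref{eq-pz2}--\eqref{eq-pbarz2}), from which \eqref{equation-p-1} is an immediate linear combination; it then gets \eqref{equation-p-2} by differentiating \eqref{equation-p-1} in $\bar z$ and inserting \eqref{harmonicity-p} and \eqref{equation-p-1} back into the result. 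You instead invert the change of variables, writing $F=2e^rp/(1+|p|^2)$, $h=e^r(1-|p|^2)/(1+|p|^2)$, and reduce everything to the conformality relation \eqref{eq-conformality} rewritten as $(1+|p|^2)^2r_z^2+4p_z\bar p_z=0$: I checked your factorization of the left side of \eqref{equation-p-1} and the Weierstrass-type substitution giving $g=(\sqrt{p_{\bar z}}+ip\sqrt{\bar p_{\bar z}})/(i\sqrt{\bar p_{\bar z}}-\bar p\sqrt{p_{\bar z}})$, and the cancellation in $N_zD-ND_z$ via the harmonicity of $p$ does come out to $|p_{\bar z}|^2-|p_z|^2$ as claimed, so \eqref{equation-p-2} follows without any differentiation of \eqref{equation-p-1}. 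What the paper's route buys is that the intermediate identities \eqref{eq-pz2}--\eqref{eq-pbarz2} are reused repeatedly later (in the induced-metric corollary, the curvature lemma, and Remark \ref{quociente}), whereas your route makes the role of conformality completely transparent and produces an explicit rational formula for $g$ in terms of $p$ and $r$. Two small points to tidy up: (i) you overload the letter $X$ for both the immersion and the square root of $p_{\bar z}$ -- rename one; (ii) "extends by continuity" at degenerate points is not quite enough if $p_{\bar z}$ or $\bar p_{\bar z}$ vanishes on an open set -- there you should note that the conformality relation forces $r_{\bar z}=0$ and hence $g=p$ or $g=-1/\bar p$ respectively, in which cases \eqref{equation-p-2} is checked directly (this mirrors the separate case the paper itself treats).
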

\begin{proof}
Indeed, equation \eqref{harmonicity-p} is the harmonicity condition for the map $p$.

We restrict ourselves on an open set of $\Sigma$ which $p\neq 0$. Away from the points where $g=0$ and $\infty$, by definition of $p$ and \eqref{eq-F}, we compute
\begin{align}
  p_z &=-\frac{\eta(|X|+h+\bar{g}F)^2}{4|X|(|X|+h)^2\bar{g}},\label{eq-pz} \\
  \bar{p}_z
  &=\frac{\eta(\bar{g}|X|+h\bar{g}-\bar{F})^2}{4|X|(|X|+h)^2\bar{g}}.\label{eq-barpz}
\end{align}

On the other hand, we have $|X|+h=F/p\in\R$,
\[\displaystyle(|X|+h+\bar{g}F)^2=\frac{F^2}{p^2}(1+\bar{g}p)^2
\text{ \ and \ }
\displaystyle(|X|+h+\bar{g}F)^2=\frac{F^2}{p^2}(1+\bar{g}p)^2.\]
Then, by equations \eqref{eq-pz} and \eqref{eq-pz2}, we get
\begin{align}
  4|X|\bar{g}p_z &= -\eta(1+\bar{g}p)^2, \label{eq-pz2}\\
  4|X|\bar{g}\bar{p}_z &= \eta(\bar{g}-\bar{p})^2. \label{eq-pbarz2}
\end{align}
Therefore, $(\bar{g}-\bar{p})^2\eqref{eq-pz2}+(1+\bar{g}p)^2\eqref{eq-pbarz2}=0$ that implies \eqref{equation-p-1}. By continuity, \eqref{equation-p-1} holds at a point where $g=0$. Analogously, equation \eqref{equation-p-1} holds on a neighbourhood of a point which $p=0$.

If $p=g$ or $p=-1/\bar{g}$ on an open set of $\Sigma$, then equation \eqref{equation-p-2} is trivial. Suppose that $g\neq 0$ on an open set of $\Sigma$. Then, away from the points where $p=g$ and $p=-1/\bar{g}$, we observe that $p_z$ is well defined by \eqref{eq-pz2} and it does not vanish. Differentiating equation \eqref{equation-p-1} with respect to $\bar{z}$ and multiplying by $(1+|p|^2)(1+\bar{g}p)$, we have
\begin{multline*}
2(1+|p|^2)(\bar{g}-\bar{p})(1+\bar{g}p)(\bar{g}_{\bar{z}}-\bar{p}_{\bar{z}})p_z +(\bar{g}-\bar{p})^2(1+\bar{g}p)\big[(1+|p|^2)p_{z\bar{z}}\big]\\
+2(1+|p|^2)(\bar{g}_{\bar{z}}p+\bar{g}p_{\bar{z}})\big[(1+\bar{g}p)^2\bar{p}_z\big]+(1+\bar{g}p)^3\big[(1+|p|^2)\overline{p_{z\bar{z}}}\big]=0.
\end{multline*}
From the equation above, using \eqref{harmonicity-p} and \eqref{equation-p-1} into to the brackets terms, we get
\[2(\bar{g}-\bar{p})p_z\Big\{(1+|p|^2)^2\bar{g}_{\bar{z}}- (1+\bar{g}p)^2\bar{p}_{\bar{z}}-(\bar{g}-\bar{p})^2p_{\bar{z}}\Big\}=0,\]
which implies \eqref{equation-p-2}. By continuity, \eqref{equation-p-2} holds at a point where $p=g$ or $p=-1/\bar{g}$. Analogously, equation \eqref{equation-p-2} holds on a neighbourhood of a point which $g=0$.
\end{proof}

\begin{remark}\label{2-spheres}
We notice that if $p=g$ (respec., $p=-1/\bar{g}=\widetilde{g}$) on $\Sigma$, by the definitions of $p$ and $g$, we have that $X/|X|=N$ (respec., $X/|X|=-N$), i.e., $X(\Sigma)$ is part of a totally geodesic $2-$sphere in $\S2R$. Moreover, in both cases, the Gauss map is holomorphic.
\end{remark}

In the next result, we derive an important second order equation satisfied by the Gauss map $g$ and the map $p$.

\begin{proposition}\label{prop-gaussmap}
The Gauss map $g$ and the map $p$ satisfy
  \begin{multline}\label{equation-gaussmap}
\big(|g-p|^2-|1+\bar{g}p|^2\big)(1+|g|^2)g_{z\bar{z}}+2(g-p)(1+g\bar{p})|g_z|^2\\
 +2\big(\bar{g}|1+\bar{g}p|^2-(\bar{g}-\bar{p})(1+|g|^2)\big)g_zg_{\bar{z}}=0
  \end{multline}
when $g\neq \infty$.
\end{proposition}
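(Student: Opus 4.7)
The strategy is to differentiate equation \eqref{equation-p-2} with respect to $\bar z$ and then systematically eliminate every $p$-derivative using the three relations \eqref{harmonicity-p}, \eqref{equation-p-1}, \eqref{equation-p-2}. Applying $\partial_{\bar z}$ to the identity $(1+|p|^2)^2 g_z=(1+g\bar p)^2 p_z+(g-p)^2\bar p_z$ produces
\[
(1+|p|^2)^2 g_{z\bar z}+2(1+|p|^2)(\bar p\, p_{\bar z}+p\,\bar p_{\bar z})g_z=(1+g\bar p)^2 p_{z\bar z}+(g-p)^2\bar p_{z\bar z}+R,
\]
where $R$ collects the algebraic cross terms involving products of $p_z,\bar p_z$ with $g_{\bar z},p_{\bar z},\bar p_{\bar z}$. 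The harmonicity condition \eqref{harmonicity-p} and its conjugate then allow the substitutions $p_{z\bar z}=2\bar p\, p_z p_{\bar z}/(1+|p|^2)$ and $\bar p_{z\bar z}=2p\,\bar p_z\bar p_{\bar z}/(1+|p|^2)$, which remove all second derivatives of $p$.

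Next, I view \eqref{equation-p-2} together with \eqref{equation-p-1} as a $2\times 2$ linear system in the unknowns $p_z,\bar p_z$. Its determinant equals $(1+g\bar p)^2(1+\bar g p)^2-(g-p)^2(\bar g-\bar p)^2=|1+\bar g p|^4-|g-p|^4$, and by the key algebraic identity
\[
|1+\bar g p|^2+|g-p|^2=(1+|g|^2)(1+|p|^2),
\]
it factors as $(1+|g|^2)(1+|p|^2)(|1+\bar g p|^2-|g-p|^2)$. Cramer's rule then yields $p_z$ and $\bar p_z$ as explicit rational functions of $g,p$ times $g_z$, and conjugation gives the analogous formulas for $p_{\bar z},\bar p_{\bar z}$ in terms of $g_{\bar z}$. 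Plugging these four expressions into the identity obtained after the harmonicity step produces an equation involving only $g,p,g_z,g_{\bar z},g_{z\bar z}$.

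The main obstacle is then purely algebraic. After multiplying through by the common denominator $(|1+\bar g p|^2-|g-p|^2)(1+|g|^2)$ and repeatedly applying the identity above to simplify the recurrent combinations $|1+\bar g p|^2\pm|g-p|^2$, one has to organise the many cross terms so that the coefficients of $g_{z\bar z}$, $|g_z|^2$ and $g_zg_{\bar z}$ collapse exactly to $(|g-p|^2-|1+\bar g p|^2)(1+|g|^2)$, $2(g-p)(1+g\bar p)$ and $2(\bar g|1+\bar g p|^2-(\bar g-\bar p)(1+|g|^2))$, as required by \eqref{equation-gaussmap}. The exceptional loci $p=g$ and $p=-1/\bar g$, where the linear system degenerates, correspond by Remark \ref{2-spheres} to pieces of totally geodesic $2$-spheres (on which both sides of \eqref{equation-gaussmap} can be checked directly) and are otherwise handled by continuity.
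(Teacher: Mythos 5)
Your proposal is correct and follows essentially the same route as the paper: differentiate \eqref{equation-p-2} in $\bar z$, eliminate $p_{z\bar z}$ via \eqref{harmonicity-p}, solve \eqref{equation-p-1}--\eqref{equation-p-2} for $p_z,\bar p_z$ in terms of $g_z$ using the factorization $|g-p|^4-|1+\bar gp|^4=(1+|g|^2)(1+|p|^2)(|g-p|^2-|1+\bar gp|^2)$, and treat the loci $p=g$, $p=-1/\bar g$, $g_z=0$ by direct check or continuity. The paper carries out the same elimination in the same order (its equations \eqref{eq-prop3}, \eqref{eq-pz-and-barpz}, \eqref{eq-g-z}, \eqref{eq-pz-expression}--\eqref{eq-barpz-expression}), so there is no substantive difference.
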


\begin{proof}
Firstly, note that if $p=g$ or $p=-1/\bar{g}$ on an open set of $\Sigma$, then \eqref{equation-gaussmap} is equivalent to \eqref{harmonicity-p}, and if $g_z=0$ or $g=0$ on an open set of $\Sigma$, then \eqref{equation-gaussmap} is trivial.

Suppose that $g\neq 0$ on an open set of $\Sigma$. Then, away from the points where $p=g$ and $p=-1/\bar{g}$, $p_z$ and $\bar{p}_z$ are well defined by \eqref{eq-pz2} and \eqref{eq-pbarz2} and they do not vanish. Differentiating equation \eqref{equation-p-2} and using \eqref{harmonicity-p}, we get
\begin{multline*}
(1+|p|^2)^2g_{z\bar{z}}+\frac{2\bar{p}p_{\bar{z}}}{1+|p|^2}\Big((1+|p|^2)^2g_z-(1+g\bar{p})^2p_z\Big) +2(g-p)p_{\bar{z}}\bar{p}_z\\
+\frac{2p\bar{p}_{\bar{z}}}{1+|p|^2}\Big((1+|p|^2)^2g_z-(g-p)^2\bar{p}_z\Big)-2g(1+g\bar{p})p_z\bar{p}_{\bar{z}} \\
-2\bar{p}(1+g\bar{p})p_zg_{\bar{z}}-2(g-p)\bar{p}_zg_{\bar{z}}=0.
\end{multline*}
By equation \eqref{equation-p-2}, we have
\begin{multline}\label{eq-prop3}
(1+|p|^2)^2g_{z\bar{z}}+\frac{2(g-p)(1+g\bar{p})}{1+|p|^2}(|\bar{p}_z|^2-|p_z|^2)\\
-2\bar{p}(1+g\bar{p})p_zg_{\bar{z}}-2(g-p)\bar{p}_zg_{\bar{z}}=0.
\end{multline}

On the other hand, by equation \eqref{equation-p-1}, we have
\begin{equation}\label{eq-pz-and-barpz}
p_z = -\frac{(1+\bar{g}p)^2}{(\bar{g}-\bar{p})^2}\bar{p}_z,
\end{equation}
and then \eqref{equation-p-2} implies
\begin{equation}\label{eq-g-z}
  (1+|p|^2)^2g_z = \frac{1}{(\bar{g}-\bar{p})^2}\big(|g-p|^4-|1+\bar{g}p|^4\big)\bar{p}_z.
\end{equation}

At a point where $g_z\neq 0$, we get $|g-p|^4-|1+\bar{g}p|^4\neq0$. We also note that $|g-p|^4-|1+\bar{g}p|^4=(1+|g|^2)(1+|p|^2)(|g-p|^2-|1+\bar{g}p|^2)$ and then we get
\begin{align}
  p_z &= - \frac{(1+|p|^2)(1+\bar{g}p)^2}{(1+|g|^2)(|g-p|^2-|1+\bar{g}p|^2)}g_z,\label{eq-pz-expression} \\
  \bar{p}_z &= \frac{(1+|p|^2)(\bar{g}-\bar{p})^2}{(1+|g|^2)(|g-p|^2-|1+\bar{g}p|^2)}g_z.\label{eq-barpz-expression}
\end{align}

Then, by \eqref{eq-prop3}, we have
\begin{multline}\label{eq-prop4}
(|g-p|^2-|1+\bar{g}p|^2)(1+|p|^2)(1+|g|^2)g_{z\bar{z}}+2(1+|p|^2)(g-p)(1+g\bar{p})|g_z|^2\\
  +2\Big(\bar{p}(1+\bar{g}p)|1+\bar{g}p|^2-(\bar{g}-\bar{p})|g-p|^2\Big)g_zg_{\bar{z}}=0,
\end{multline}
and since
\[\bar{p}(1+\bar{g}p)|1+\bar{g}p|^2-(\bar{g}-\bar{p})|g-p|^2 = (1+|p|^2)\Big(\bar{g}|1+\bar{g}p|^2-(\bar{g}-\bar{p})(1+|g|^2)\Big),\]
we get \eqref{equation-gaussmap} dividing \eqref{eq-prop4} by $(1+|p|^2)$. By continuity, \eqref{equation-p-2} holds on a neighbourhood of a point which $g_z=0$. If $p=g$ or $p=-1/\bar{g}$ at some point, again by continuity \eqref{equation-p-2} holds. Analogously, equation \eqref{equation-p-2} holds on a neighbourhood of a point which $g=0$.
\end{proof}

\begin{proposition}\label{anti-holomorphic}
There is no minimal conformal immersion $X:\Sigma\to\S2R$ which its Gauss map $g$ is an anti-holomorphic non-constant map.
\end{proposition}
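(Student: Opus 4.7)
The plan is to combine the anti-holomorphicity of $g$ with the minimality equation \eqref{eqminimality-2} to force the real auxiliary function $\Omega = F\bar g + \bar F g + h(1-|g|^2)$ to vanish identically on $\Sigma$, and then to derive an immediate contradiction by differentiating $\Omega \equiv 0$ and substituting back into it.

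First I would use that $g$ anti-holomorphic means $g_z \equiv 0$, hence $\bar g_{\bar z} = \overline{g_z} \equiv 0$ as well. Substituting into \eqref{eqminimality-2}, the left-hand side $4|X|^2 g\,\bar g_{\bar z}$ vanishes, giving $\bar\eta\,(1+|g|^2)\,\Omega \equiv 0$. Since $g$ is non-constant anti-holomorphic (so $\bar g$ is a non-constant holomorphic map), the zeros and poles of $g$ — equivalently, the zeros of $\eta$ — are isolated, so $\Omega \equiv 0$ on all of $\Sigma$ by continuity.

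Next I would differentiate $\Omega \equiv 0$ with respect to $\bar z$. Using $\bar g_{\bar z} = 0$ together with the identities \eqref{eq-F}, which give $F_{\bar z} = g\bar\eta/2$ and $\bar F_{\bar z} = \overline{F_z} = -\bar\eta/(2g)$, and the fact that $h_{\bar z} = \bar\eta/2$, all the terms involving $\eta$ collapse ($|g|^2\bar\eta/2 - \bar\eta/2 + \bar\eta/2 - |g|^2\bar\eta/2 = 0$), leaving
\[\Omega_{\bar z} = g_{\bar z}\,(\bar F - h\bar g).\]
On the open dense subset where $g_{\bar z} \neq 0$ and $g \notin \{0,\infty\}$, this forces $\bar F = h\bar g$, i.e. $F = hg$. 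Substituting $F = hg$ back into $\Omega \equiv 0$ yields $\Omega = 2h|g|^2 + h - h|g|^2 = h(1+|g|^2) = 0$, hence $h \equiv 0$ on this dense subset, and consequently also $F = hg \equiv 0$. Thus $X = (F,h) \equiv 0$ on an open dense set, contradicting $X:\Sigma\to\R^3\setminus\{0\}$.

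The main obstacle is organising the cancellation of the $\bar\eta$-terms in $\Omega_{\bar z}$, which relies essentially on $\bar g_{\bar z} = 0$ together with the precise form of \eqref{eq-F}. As a sanity check and alternative route, one can observe that $\Omega$ is nothing but $\tfrac{(1+|g|^2)}{|X|}\langle X, N\rangle_{\R^3}$, so $\Omega \equiv 0$ says the radial field $X/|X|$ is tangent to $X(\Sigma)$; hence $X(\Sigma)$ is invariant under the homotheties $x\mapsto e^s x$, which in our model are the vertical translations of $\S2R$, so $X(\Sigma)$ is a vertical cylinder — and Proposition \ref{cylinders2} then forces $g$ to be constant, contradicting the hypothesis.
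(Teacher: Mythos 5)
Your proof is correct, and it is worth comparing with the paper's, since you reach the same pivotal identity by a different and in fact shorter route. The paper stays inside the $(g,p)$ formalism: it invokes equation \eqref{eq-g-z} to split into the two cases $\bar{p}_z\equiv 0$ (ruled out because $p$ would then be constant, contradicting that $X$ is an immersion) and $|g-p|^2=|1+\bar{g}p|^2$, and only afterwards observes that the latter condition is exactly $\Omega=F\bar{g}+\bar{F}g+h(1-|g|^2)=0$. You instead feed $\bar{g}_{\bar{z}}=\overline{g_z}=0$ directly into the horizontal minimality equation \eqref{eqminimality-2}, which annihilates the left-hand side and yields $\bar{\eta}(1+|g|^2)\Omega\equiv 0$; since the zeros of $\eta$ coincide with the points where $g\in\{0,\infty\}$, which form a discrete set for a non-constant anti-holomorphic $g$, this gives $\Omega\equiv 0$ with no case analysis and no reference to $p$ at all. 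The middle step is then identical in substance to the paper's: differentiating $\Omega\equiv0$ and using \eqref{eq-F} leaves only $g_{\bar{z}}(\bar{F}-h\bar{g})$ (the paper computes the conjugate quantity $\bar{g}_z(F-hg)$), whence $F=hg$ on a dense open set. Your finish is cleaner: substituting $F=hg$ back into $\Omega=0$ gives $h(1+|g|^2)=0$, hence $X=(F,h)\equiv 0$, impossible for a map into $\R^3\setminus\{0\}$; the paper instead differentiates $F-hg=0$ once more to force $\eta\equiv 0$, i.e.\ $g$ constant. Your closing observation that $\Omega=\tfrac{1+|g|^2}{|X|}\langle X,N\rangle_{\R^3}$ is the normalized Euclidean support function is also sound and gives a third finish: $\Omega\equiv0$ makes the radial field tangent to the image, so $X(\Sigma)$ is part of a cone, i.e.\ of a vertical cylinder, and Proposition \ref{cylinders2} forces $g$ to be constant. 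The only thing the paper's longer route buys is uniformity with the $(g,p)$ machinery it needs elsewhere in Section 4.
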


\begin{proof}
By contradiction, suppose that exists a such immersion which $g_z=0$ on $\Sigma$. Away from the points where $p=g$ and $p=-1/\bar{g}$, by equation \eqref{eq-g-z}, we get that either $\bar{p}_z=0$ or $|g-p|^2-|1+\bar{g}p|^2=0$.

If $\bar{p}_z=0$ on $\Sigma$, then, by equations \eqref{equation-p-1} and \eqref{equation-p-2}, we get $p_z=0$. Thus $p$ is a constant function on $\Sigma$ and, by definition of $p$, we have $\varphi^{-1}(\text{const})=X/|X|$; however it is not possible since $X$ is an immersion.

On the other hand, by the definition of $p$, $|g-p|^2-|1+\bar{g}p|^2=0$ on $\Sigma$ if, and only if,
\begin{equation}\label{eq-omega}
F\bar{g}+\bar{F}g+h(1-|g|^2)=0.
\end{equation}
Differentiating equation \eqref{eq-omega} and using \eqref{eq-F}, we get $\bar{g}_z(F-hg)=0$, that is, $F-hg=0$ on $\Sigma$. Finally, differentiating $F-hg=0$ and using $\eqref{eq-F}$, we get $-\eta(1+|g|^2)/\bar{g}=0$ which holds if, and only if, $\eta=0$, i.e., if $g=0$ or $\infty$ on $\Sigma$.
\end{proof}

\begin{remark}\label{quociente} Suppose that $g$ is a non-constant map. By equations \eqref{equation-p-1} and \eqref{equation-p-2}, we get
\[(1+|p|^2)^3|g_z|^2 = (1+|g|^2)(|g-p|^2-|1+\bar{g}p|^2)(|\bar{p}_z|^2-|p_z|^2). \]
On the other hand, since \eqref{eq-pz2} and \eqref{eq-pbarz2}, we have
\[16|X|^2|g|^2(|\bar{p}_z|^2-|p_z|^2)=|\eta|^2(1+|g|^2)(1+|p|^2)(|g-p|^2-|1+\bar{g}p|^2),\]
then
\[16|X|^2|g|^2(1+|p|^2)^2|g_z|^2=|\eta|^2(1+|g|^2)^2(|g-p|^2-|1+\bar{g}p|^2)^2.\]
Since $g$ is not constant,  $\eta$ cannot vanish on an open set of $\Sigma$ and, by Proposition \ref{anti-holomorphic}, the same happens for $g_z$. At a point where $\eta\neq 0$ and $g_z \neq 0$, we have
\[\frac{(|g-p|^2-|1+\bar{g}p|^2)^2}{|g_z|^2}=\frac{16|X|^2|g|^2(1+|p|^2)^2}{|\eta|^2(1+|g|^2)^2}.\]
Therefore, by continuity, the left side of the equation above is a function that do not vanish on $\Sigma$, when $g$ is a non-constant map.
\end{remark}

\begin{corollary}\label{eq-inducedmetric2}
Let $X:\Sigma\to\S2R$ be a minimal conformal immersion. Then the metric induced by $X$ is given by
\[\dif s^2 =\frac{4(1+|p|^2)^2|g_z|^2}{(|g-p|^2-|1+\bar{g}p|^2)^2}|\dif z|^2\]
when $g\neq \infty.$
\end{corollary}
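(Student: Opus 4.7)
The plan is to combine the two ingredients already at our disposal: the expression for $\dif s^2$ in \eqref{eq-inducedmetric1} and the identity derived in Remark \ref{quociente}. Specifically, \eqref{eq-inducedmetric1} reads
\[\dif s^2 = \frac{(1+|g|^2)^2|\eta|^2}{4|g|^2|X|^2}|\dif z|^2\]
at points where $g\neq 0,\infty$, while Remark \ref{quociente} supplies
\[16|X|^2|g|^2(1+|p|^2)^2|g_z|^2 = |\eta|^2(1+|g|^2)^2(|g-p|^2-|1+\bar{g}p|^2)^2.\]

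First I would solve the identity from Remark \ref{quociente} for the conformal factor $(1+|g|^2)^2|\eta|^2/(4|g|^2|X|^2)$ appearing in \eqref{eq-inducedmetric1}, obtaining
\[\frac{(1+|g|^2)^2|\eta|^2}{4|g|^2|X|^2}=\frac{4(1+|p|^2)^2|g_z|^2}{(|g-p|^2-|1+\bar{g}p|^2)^2}.\]
Substituting this into \eqref{eq-inducedmetric1} establishes the corollary on the open set where $g\neq 0,\infty$. The constant-$g$ case is governed by Proposition \ref{cylinders2}, so one may assume $g$ non-constant throughout for the formula to be meaningful; in particular $\eta$ and $g_z$ do not vanish on open sets, consistent with the hypotheses under which Remark \ref{quociente} was derived.

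Second, I would extend the formula across the locus $\{g=0\}$ by continuity. The key observation, already contained in Remark \ref{quociente}, is that $(|g-p|^2-|1+\bar{g}p|^2)^2/|g_z|^2$ is a smooth, nowhere-vanishing function on $\Sigma$ when $g$ is non-constant, so its reciprocal is smooth on $\{g\neq\infty\}$. Hence both sides of the claimed formula are smooth functions on $\{g\neq\infty\}$ that agree on the dense open subset $\{g\neq 0,\infty\}$, forcing equality everywhere on $\{g\neq\infty\}$. There is no substantive obstacle: the corollary is essentially a repackaging of Remark \ref{quociente} and the only subtlety is the well-posedness of the right-hand side, which is exactly what that remark provides.
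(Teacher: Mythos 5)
Your proof is correct and follows essentially the same route as the paper: both start from \eqref{eq-inducedmetric1} and eliminate the factor $|\eta|^2/(|g|^2|X|^2)$ in favour of $|g_z|^2$ using the relations among $\eta$, $\bar{p}_z$ and $g_z$, then extend across $\{g=0\}$ by continuity. The only cosmetic difference is that you invoke the consolidated identity of Remark \ref{quociente} in a single substitution, whereas the paper substitutes \eqref{eq-pbarz2} and then \eqref{eq-barpz-expression} successively; the underlying ingredients are identical.
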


\begin{proof}
Let $U\subset\Sigma$ be an open set which $p\neq g$. At a point where $g\neq 0$, by equations \eqref{eq-inducedmetric1} and  \eqref{eq-pbarz2}, we have that the metric induced by $X$, in the minimal case, is given by
\[\dif s^2 = \frac{4(1+|g|^2)^2|\bar{p}_z|^2}{|g-p|^4}|\dif z|^2.\]
However, using equation \eqref{eq-barpz-expression}, we get
\begin{equation*}
\dif s^2 =\frac{4(1+|p|^2)^2|g_z|^2}{(|g-p|^2-|1+\bar{g}p|^2)^2}|\dif z|^2.
\end{equation*}
By continuity, this expression holds at a point where $g=0$. Analogously, equation \eqref{equation-p-2} holds on a neighbourhood of a point which $g=p$.
\end{proof}
%

\begin{proposition}
Let $X:\Sigma\to\S2R$ be a minimal conformal immersion with a holomorphic non-constant Gauss map $g$. Then $X(\Sigma)$ is part of a totally geodesic $2-$sphere in  $\s^2\times\R$.
\end{proposition}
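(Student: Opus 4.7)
The strategy is to plug the holomorphicity assumption $g_{\bar z}=0$ into the structural equation \eqref{equation-gaussmap} for the Gauss map, extract a pointwise alternative $p=g$ or $p=-1/\bar g$, promote it from a dense open set to all of $\Sigma$, and then invoke Remark \ref{2-spheres}.

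First I would work on an open set where $g\neq\infty$ so that equation \eqref{equation-gaussmap} from Proposition \ref{prop-gaussmap} applies. Because $g$ is holomorphic and smooth, $g_{\bar z}=0$ forces $g_{z\bar z}=(g_{\bar z})_z=0$ and $g_zg_{\bar z}=0$. Consequently the first and third terms of \eqref{equation-gaussmap} vanish identically, and the equation collapses to
\[
2(g-p)(1+g\bar p)\,|g_z|^2=0.
\]
This is the key algebraic consequence.

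Next I would exploit the holomorphicity of $g_z$. Since $g_z$ is itself a holomorphic function of $z$ (the mixed derivative vanishes) and $g$ is non-constant, $g_z$ is not identically zero, and hence its zero set $Z=\{g_z=0\}$ is a discrete subset of $\Sigma$. On the open dense set $\Sigma\setminus Z$ we therefore have at each point either $p=g$ or $p=-1/\bar g$. The two smooth functions $g$ and $-1/\bar g$ never coincide (otherwise $|g|^2=-1$), so on each connected component of $\Sigma\setminus Z$ a continuity argument forces exactly one of the alternatives to hold throughout. Because $\Sigma$ is a connected surface and $Z$ is discrete, $\Sigma\setminus Z$ is connected, so one and only one of the identities
\[
p=g\quad\text{or}\quad p=-\tfrac{1}{\bar g}
\]
holds on all of $\Sigma\setminus Z$, and then by continuity on all of $\Sigma$. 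In the case where $g$ takes the value $\infty$, one repeats the argument with $\widetilde g=-1/\bar g$ (cf.\ the remark after the definition of $g$), which fits consistently with the two options above.

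Finally, Remark \ref{2-spheres} translates each of these identities into a geometric conclusion: $p=g$ gives $X/|X|=N$ and $p=-1/\bar g$ gives $X/|X|=-N$. In either case, $|X|$ is constant along $\Sigma$ (differentiating $X/|X|=\pm N$ and using that $N$ is normal shows $X$ lies on a Euclidean sphere about the origin in the model $(\R^3\setminus\{0\},\dif\mu^2)$), which is precisely a totally geodesic $2$-sphere $\s^2\times\{a\}$ of $\s^2\times\R$. The main obstacle I foresee is the global step promoting the pointwise dichotomy $p=g$ or $p=-1/\bar g$ to a single global identity; this is handled cleanly by the discreteness of $Z$ coming from holomorphicity of $g_z$, which is the reason the analogous argument fails under the antiholomorphic hypothesis of Proposition \ref{anti-holomorphic}.
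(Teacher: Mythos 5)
Your proof is correct and follows essentially the same route as the paper: substitute $g_{\bar z}=0$ into equation \eqref{equation-gaussmap} so that it collapses to $(g-p)(1+g\bar p)|g_z|^2=0$, conclude $p=g$ or $p=-1/\bar g$, and invoke Remark \ref{2-spheres}. The only difference is that you spell out the globalization of the pointwise dichotomy (discreteness of the zeros of the holomorphic function $g_z$ plus a connectedness argument), a step the paper leaves implicit.
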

\begin{proof}
If $g$ is a holomorphic non-constant function, by Proposition \ref{prop-gaussmap}, we have $(g-p)(1+g\bar{p})|g_z|^2=0$, that is, either $p=g$ or $p=-1/\bar{g}$. Therefore, $X(\Sigma)$ is part of a totally geodesic $2-$sphere in $\s^2\times\R$.
\end{proof}

\begin{lemma}\label{curvatura}
The curvature $K$ of the metric induced by $X$ is
\[K=\frac{(|g-p|^2-|1+\bar{g}p|^2)^2}{(1+|g|^2)^2(1+|p|^2)^2|g_z|^2}(|g_z|^2-|\bar{g}_z|^2).\]
This formula holds when $g\neq \infty$. In particular, $K$ vanishes at singular points of $g$.
\end{lemma}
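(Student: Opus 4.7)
The plan is to use the explicit formula for the induced metric provided by Corollary~\ref{eq-inducedmetric2}. Setting $A:=|g-p|^2-|1+\bar gp|^2$, this metric has the conformal form $\dif s^2=\lambda^2|\dif z|^2$ with
\[\lambda^2=\frac{4(1+|p|^2)^2|g_z|^2}{A^2}\]
wherever $g\neq\infty$ and $g_z\neq 0$. Applying the classical formula $K=-\frac{2}{\lambda^2}(\log\lambda^2)_{z\bar z}$ for the Gauss curvature of a conformal metric, the whole task reduces to computing $(\log\lambda^2)_{z\bar z}$.

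To carry out that computation, I would split
\[\log\lambda^2=\log 4+2\log(1+|p|^2)+\log g_z+\log\bar g_{\bar z}-\log A^2,\]
which is the key trick: the decomposition $\log|g_z|^2=\log g_z+\log\bar g_{\bar z}$ isolates $g_{z\bar z}$ as the only second-order derivative of $g$ arising after two differentiations, and equation \eqref{equation-gaussmap} allows this quantity to be replaced by an algebraic expression in $g,\bar g,p,\bar p,g_z,g_{\bar z}$. For the $\log(1+|p|^2)$ contribution I would eliminate $p_{z\bar z}$ via the harmonicity equation \eqref{harmonicity-p}. For the $\log A^2$ contribution I would compute $A_z,A_{\bar z}$ directly and then use equations \eqref{eq-pz-expression}--\eqref{eq-barpz-expression} to convert each first derivative of $p$ into a multiple of $g_z$ or $g_{\bar z}$. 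Assembling the three pieces and simplifying with the identity
\[\bar p(1+\bar g p)|1+\bar g p|^2-(\bar g-\bar p)|g-p|^2=(1+|p|^2)\bigl(\bar g|1+\bar g p|^2-(\bar g-\bar p)(1+|g|^2)\bigr)\]
already noted in the proof of Proposition~\ref{prop-gaussmap}, I expect all explicit dependence on $p$ and $A$ to cancel, leaving
\[(\log\lambda^2)_{z\bar z}=-\frac{2\bigl(|g_z|^2-|g_{\bar z}|^2\bigr)}{(1+|g|^2)^2}.\]
Substituting back into the curvature formula, using $|\bar g_z|^2=|g_{\bar z}|^2$, and reinserting the explicit form of $\lambda^2$ delivers the stated identity. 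The vanishing of $K$ at singular points of $g$ then follows by continuity: by Proposition~\ref{anti-holomorphic} the locus $\{g_z=0\}$ has empty interior when $g$ is non-constant, so the formula extends from the open set where $g_z\neq 0$; at a critical point one has $g_z=g_{\bar z}=0$, and the factor $|g_z|^2-|g_{\bar z}|^2$ forces $K=0$.

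The principal obstacle is the sheer bookkeeping in the algebraic simplification. After the two differentiations, each of the three summands produces several terms, and the combined expression is a linear combination of $|g_z|^2$, $|g_{\bar z}|^2$, $g_z\bar g_z$ and conjugates, weighted by rational functions of $g,\bar g,p,\bar p$ divided by powers of $A$, $(1+|p|^2)$, and $(1+|g|^2)$. The non-trivial point is to check that, modulo the PDEs \eqref{harmonicity-p}, \eqref{equation-gaussmap}, and \eqref{eq-pz-expression}--\eqref{eq-barpz-expression}, all dependence on $p$ and on $A$ cancels so that only the compact expression $-2(|g_z|^2-|g_{\bar z}|^2)/(1+|g|^2)^2$ survives. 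This cancellation is the geometric content of the lemma and is what must be pushed through carefully.
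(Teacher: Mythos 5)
Your target intermediate identity $(\log\lambda^2)_{z\bar z}=-2\bigl(|g_z|^2-|g_{\bar z}|^2\bigr)/(1+|g|^2)^2$ is the correct one, but as written the proposal has a genuine gap: for this lemma the algebraic simplification \emph{is} the proof, and you defer it entirely (``I expect all explicit dependence on $p$ and $A$ to cancel'') without carrying it out. The route you choose also makes that simplification substantially heavier than necessary. Starting from Corollary~\ref{eq-inducedmetric2} and writing $\log|g_z|^2=\log g_z+\log\bar g_{\bar z}$ forces you to differentiate the second-order equation \eqref{equation-gaussmap} itself (to reduce $g_{zz\bar z}$), and you must substitute it \emph{before} the second differentiation and exploit that its right-hand side is homogeneous of degree one in $g_z$ in order to avoid uncancelled $g_{zz}$-terms; none of this is addressed. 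The paper instead differentiates the equivalent expression \eqref{eq-inducedmetric1}, $\lambda=(1+|g|^2)^2|\eta|^2/(4|g|^2|X|^2)$: there $\log\lambda$ splits into $\log\eta+\log\bar\eta-2r-\log g-\log\bar g+2\log(1+|g|^2)$, the term $r_{z\bar z}$ vanishes by minimality, and Lemma~\ref{lemma-g-eta} converts $(\log\eta)_{\bar z}$ directly into $\frac{1-|g|^2}{1+|g|^2}(\log\bar g)_{\bar z}$, so that only the mixed derivatives $g_{z\bar z}$ and $\bar g_{z\bar z}$ ever appear and they cancel identically; the stated formula is then recovered from the resulting $\eta$-version of $K$ via \eqref{eq-pbarz2} and \eqref{eq-barpz-expression}. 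If you keep your decomposition you must actually exhibit the cancellation; otherwise nothing has been proved.

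Your treatment of the final claim is also off. A singular point of $g$ is a point where $\det\dif g=|g_z|^2-|\bar g_z|^2=0$, i.e.\ where $|g_z|=|g_{\bar z}|$; it is not true that $g_z=g_{\bar z}=0$ there. At a singular point with $g_z\neq 0$ the formula gives $K=0$ immediately. At a point where $g_z=0$ the displayed formula is of the form $0/0$, and the continuity argument you gesture at requires Remark~\ref{quociente}, which shows that $(|g-p|^2-|1+\bar gp|^2)^2/|g_z|^2$ extends to a nowhere-vanishing function on $\Sigma$ when $g$ is non-constant. Proposition~\ref{anti-holomorphic} alone only tells you that $\{g_z=0\}$ has empty interior; it does not give you that this quotient stays bounded and nonzero across that set, which is what the extension of the curvature formula actually needs.
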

\begin{proof}
Since $X$ is a conformal immersion, we can compute the curvature $K$ of the metric induced by $X$  through $2K=-\Delta (\log \lambda)$, i.e., $K=-(2/\lambda) (\log\lambda)_{z\bar{z}}$, where $\lambda$ is given by $\dif s^2 = \lambda |\dif z|^2$. In order to simplify our computations, we will use the expression \eqref{eq-inducedmetric1} instead of expression obtained in Corollary \ref{eq-inducedmetric2}. Then, $\lambda$ is given by
\[\lambda=\frac{(1+|g|^2)^2|\eta|^2}{4|g|^2|X|^2}.\]

We restrict ourselves to a domain on which $g\neq 0$. We observe that, since
\[(\log \lambda)_{z\bar{z}} = 2(\log (1+|g|^2))_{z\bar{z}} + (\log\eta)_{z\bar{z}} +
(\log\bar{\eta})_{z\bar{z}} -
2r_{z\bar{z}} -
(\log g)_{z\bar{z}}-(\log\bar{g})_{z\bar{z}}\]
and $r_{z\bar{z}}=0$, once $X$ is a minimal immersion, by Lemma \ref{lemma-g-eta}, we get
\begin{multline*}
(\log \lambda)_{z\bar{z}} = \frac{2}{(1+|g|^2)^2}\Big((g_{z\bar{z}}\bar{g}+|g_z|^2+|\bar{g}_z|^2+g\bar{g}_{z\bar{z}})(1+|g|^2)-(g_z\bar{g}+g\bar{g}_z)(g_{\bar{z}}\bar{g}+g\bar{g}_{\bar{z}})\Big) \\
-\frac{2\bar{g}_{\bar{z}}}{(1+|g|^2)^2\bar{g}}(g_z\bar{g}+g\bar{g}_z)
-\frac{2g_z}{(1+|g|^2)^2g}(g_z\bar{g}+g\bar{g}_z)\\
-\frac{2g}{(1+|g|^2)\bar{g}}(\bar{g}_{z\bar{z}}\bar{g}-\bar{g}_z\bar{g}_{\bar{z}})-\frac{2\bar{g}}{(1+|g|^2)g}(g_{z\bar{z}}g-g_zg_{\bar{z}}).
\end{multline*}
Follows from a direct computation that
\[(\log \lambda)_{z\bar{z}} = \frac{2}{(1+|g|^2)^2}(|\bar{g}_z|^2-|g_z|^2).\]
Therefore, the curvature $K$ is given by
\[K = \frac{16|g|^2|X|^2}{(1+|g|^2)^4|\eta|^2}(|g_z|^2-|\bar{g}_z|^2),\]
and by continuity this formula holds at a point where $g=0$.

Let $U\subset\Sigma$ be an open set on which $p\neq g$. By equation \eqref{eq-pbarz2}, we get
\[K = \frac{|g-p|^4}{(1+|g|^2)^4|\bar{p}_z|^2}(|g_z|^2-|\bar{g}_z|^2).\]
However, by equation \eqref{eq-barpz-expression}, we get
\[K=\frac{(|g-p|^2-|1+\bar{g}p|^2)^2}{(1+|g|^2)^2(1+|p|^2)^2|g_z|^2}(|g_z|^2-|\bar{g}_z|^2),\]
and by continuity, this formula holds at a point where $p=g$. Moreover, since $\det \dif g = (|g_z|^2-|\bar{g}_z|^2)$, by Remark \ref{quociente}, we have that $K$ vanishes at singular points of $g$.
\end{proof}

\begin{proposition}\label{cylinders}
Let $X:\Sigma\to\S2R$ be a minimal conformal immersion and $g$ be its Gauss map. If $g$ is a singular map then $X(\Sigma)$ is part of a vertical cylinder over a geodesic of $\s^2$ in $\s^2\times\R$. In particular, $g$ is constant.
\end{proposition}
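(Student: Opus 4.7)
The plan is to reduce the statement to showing that the Gauss map $g$ must be constant under the singularity hypothesis; Proposition~\ref{cylinders2} then supplies the geometric conclusion that $X(\Sigma)$ lies in a vertical cylinder over a geodesic of $\s^2$.

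Assume, for contradiction, that $g$ is singular but not constant. Proposition~\ref{anti-holomorphic} rules out $g_z\equiv 0$, so $g_z\neq 0$ on a dense open subset of $\Sigma$; Remark~\ref{quociente} then shows that $D:=|g-p|^2-|1+\bar g p|^2$ never vanishes on $\Sigma$. A short computation using $p=F/(|X|+h)$ together with $|X|=(|X|+h)(1+|p|^2)/2$ and $h=(|X|+h)(1-|p|^2)/2$ yields the identity
\[
|g-p|^2-|1+\bar g p|^2=-\frac{(1+|p|^2)\,\Omega}{|X|},\qquad \Omega=F\bar g+\bar F g+h(1-|g|^2),
\]
so $\Omega$ is nowhere zero on $\Sigma$. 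By Lemma~\ref{curvatura}, the singularity also forces $K\equiv 0$.

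The key observation for reaching a contradiction is that, in our model, the components $(N_1,N_2,N_3)$ of $N$ in the orthonormal frame $E_j(x)=|x|\partial_{x_j}$ coincide with the Euclidean unit normal of $X(\Sigma)\subset\R^3$ (the Euclidean representation of $N$ is $|x|(N_1,N_2,N_3)$, of Euclidean norm $|x|$). Hence $g$ singular is equivalent to the Euclidean Gauss map of $X(\Sigma)$ having rank at most one, so $X(\Sigma)$ is locally a \emph{developable} surface in $\R^3$: a cylinder, a cone, or a tangent developable. Since $d\mu^2=|x|^{-2}\,dx^2$, the conformal transformation law for mean curvature makes minimality of $X$ in $d\mu^2$ equivalent to the Euclidean condition
\[
H_{\mathrm{Eucl}}\,|X|^2+\langle X,N\rangle_{\mathrm{Eucl}}=0.
\]

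For each of the three types of developable, writing $X$ in its natural ruling parametrization turns the previous equation into a polynomial identity in the ruling parameter $t$ all of whose coefficients must vanish. A case-by-case analysis, parallel to the Tangency Principle argument used in Proposition~\ref{cylinders2}, forces $X(\Sigma)$ to be contained in a plane of $\R^3$ through the origin, equivalently a vertical cylinder over a geodesic of $\s^2$. Such surfaces have constant Gauss map, contradicting the assumption that $g$ is non-constant. Hence $g$ is constant, and Proposition~\ref{cylinders2} delivers the geometric conclusion. The main obstacle is the tangent-developable case, in which the polynomial-in-$t$ analysis must be used to show that the generating curve has vanishing torsion (so it lies in a plane) and that this plane passes through the origin; the cylinder and cone cases follow the Tangency Principle template already used to prove Proposition~\ref{cylinders2}.
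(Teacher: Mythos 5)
Your reduction is sound up to the point where the real work begins, and it is worth noting that your route genuinely diverges from the paper's: the paper disposes of the whole proposition in three lines by observing that $g$ singular gives $|g_z|=|\bar g_z|$, hence $K\equiv 0$ by Lemma \ref{curvatura}, and then \emph{citing} the Torralbo--Urbano classification of flat minimal surfaces in $\s^2\times\R$ to conclude that $X(\Sigma)$ is a vertical cylinder over a geodesic (constancy of $g$ then follows from Proposition \ref{cylinders2}). You instead try to prove that classification from scratch inside the conformal model, via the correct observations that $(N_1,N_2,N_3)$ is also the Euclidean unit normal, that $g$ singular forces the Euclidean Gauss map to have rank at most one (so $K_{\mathrm{Eucl}}\equiv 0$), and that minimality for $\dif\mu^2$ becomes $H_{\mathrm{Eucl}}|X|^2=\langle X,N\rangle_{\mathrm{Eucl}}$. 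That is an attractive, more self-contained strategy.

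The gap is that the decisive step is asserted rather than proved. The sentence ``a case-by-case analysis \dots forces $X(\Sigma)$ to be contained in a plane of $\R^3$ through the origin'' \emph{is} the theorem being invoked by the paper; everything before it is routine, and everything after it is Proposition \ref{cylinders2}. Concretely, two things are missing. First, the reduction of an arbitrary Euclidean-flat surface to the three model types (cylinder, cone, tangent developable) is a local structure theorem that only holds away from planar points; the set where the second fundamental form vanishes identically and the transitions between types must be handled (analyticity of $X$ would help here, but you do not invoke it). Second, none of the three polynomial-in-$t$ computations is actually carried out, and you yourself flag the tangent-developable case as the obstacle (one must show the ruling forces the torsion of the generating curve to vanish and then that the resulting plane contains the origin). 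Until those computations are written down, the proof does not close. Minor further points: the claim that $|g-p|^2-|1+\bar gp|^2$ ``never vanishes on $\Sigma$'' overstates Remark \ref{quociente} (at a point where $g_z=0$ that quantity does vanish; the remark only controls the ratio), and the identity relating it to $\Omega$ is correct but plays no role in the rest of your argument.
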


\begin{proof}
Since $\det g= |g_z|^2-|\bar{g}_z|^2$, if the Gauss map $g$ is singular, then  $|g_z|=|\bar{g}_z|$ on $\Sigma$, and by Lemma \ref{curvatura}, we get that $K=0$. Therefore, by \cite[Torralbo-Urbano]{TorralboUrbano15}, $X(\Sigma)$ is part of a vertical cylinder over a geodesic of $\s^2$ in $\s^2\times\R$. Hence, by Proposition \ref{cylinders2}, $g$ is constant.
\end{proof}

\begin{corollary}\label{g-harmonic}
Let $X:\Sigma\to\S2R$ be a minimal conformal immersion and $g$ be its Gauss map. If $g$ is a non-constant harmonic map then $X(\Sigma)$ is part of a totally geodesic $2-$sphere in  $\s^2\times\R$.
\end{corollary}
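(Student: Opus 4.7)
The plan is to couple the harmonic map equation for $g$ with the structural identity \eqref{equation-gaussmap} of Proposition \ref{prop-gaussmap}. Recall that $g:\Sigma\to\bar{\C}$ is harmonic (as a map into $(\bar{\C},4(1+|w|^2)^{-2}|\dif w|^2)$, i.e. into $\s^2$) precisely when, in any conformal coordinate $z$,
\[(1+|g|^2)\,g_{z\bar z}=2\bar g\,g_z g_{\bar z}.\]
Substituting this expression into \eqref{equation-gaussmap}, the first term loses its $(1+|g|^2)$ factor, and regrouping the two surviving $g_z g_{\bar z}$ contributions via the algebraic identity
\[\bar g|g-p|^2-(\bar g-\bar p)(1+|g|^2)=-(\bar g-\bar p)(1+\bar g p)\]
collapses the whole relation to
\[(g-p)(1+g\bar p)|g_z|^2=(\bar g-\bar p)(1+\bar g p)\,g_z g_{\bar z}.\]

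Set $A=(g-p)(1+g\bar p)$, so that $\bar A=(\bar g-\bar p)(1+\bar g p)$ is its complex conjugate. Using $|g_z|^2=g_z\bar g_{\bar z}$, the previous identity becomes $A\bar g_{\bar z}=\bar A g_{\bar z}$ at every point where $g_z\neq 0$. In view of Remark \ref{2-spheres}, to conclude the corollary it suffices to prove that $A\equiv 0$ on $\Sigma$, i.e. $p=g$ or $p=-1/\bar g$ pointwise, which is exactly the characterisation of $X(\Sigma)$ being part of a totally geodesic $2$-sphere.

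The last step is by contradiction: suppose $A\not\equiv 0$ and work on a connected open set $U$ where $A\neq 0$. If $g_{\bar z}\equiv 0$ on $U$, then $g$ is holomorphic on $U$, hence on $\Sigma$ by real analyticity of minimal surfaces, and the preceding proposition on holomorphic non-constant Gauss maps already yields the conclusion. Otherwise shrink $U$ so that $g_{\bar z}\neq 0$; since by Proposition \ref{anti-holomorphic} the set $\{g_z\neq 0\}$ is dense, on the common open set the identity $A\bar g_{\bar z}=\bar A g_{\bar z}$ combined with $|A|=|\bar A|$ forces $|\bar g_{\bar z}|=|g_{\bar z}|$. Since $|\bar g_{\bar z}|=|g_z|$ and $|\bar g_z|=|g_{\bar z}|$, this gives $|g_z|^2-|\bar g_z|^2=0$ on $U$, hence on all of $\Sigma$ by analyticity, so $\det \dif g\equiv 0$ and $g$ is singular. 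Proposition \ref{cylinders} then forces $g$ to be constant, contradicting the hypothesis. The main obstacle is the algebraic simplification after the substitution and the bookkeeping of the zero loci of $g_z$, $g_{\bar z}$ and $A$, which is handled by combining Proposition \ref{anti-holomorphic}, Proposition \ref{cylinders}, and the proposition on holomorphic Gauss maps.
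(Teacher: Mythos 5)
Your proof is correct and follows essentially the same route as the paper: substitute the harmonic map equation into \eqref{equation-gaussmap}, reduce to the dichotomy ``$(g-p)(1+g\bar p)=0$ or $\det \dif g=0$'', rule out the second alternative via Proposition \ref{cylinders}, and conclude by Remark \ref{2-spheres}. The only difference is cosmetic: you keep the reduced identity in complex form $A\bar g_{\bar z}=\bar A g_{\bar z}$ and track the zero loci of $g_z$, $g_{\bar z}$ explicitly, whereas the paper takes moduli at once to get $|g-p|^2|1+\bar g p|^2(|g_z|^2-|g_{\bar z}|^2)=0$; your extra care with the density and analyticity bookkeeping is a welcome clarification rather than a deviation.
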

\begin{proof}
If $g$ is a non-constant harmonic map, then $g$ satisfies
\[(1+|g|^2)g_{z\bar{z}}-2\bar{g}g_z g_{\bar{z}}=0,\]
and, by Proposition \ref{prop-gaussmap}, we get that
\[|g-p|^2|1+\bar{g}p|^2(|g_z|^2-|g_{\bar{z}}|^2)=0.\]
Since $g$ is non-constant, then $(|g_z|^2-|g_{\bar{z}}|^2)\neq 0$, otherwise, by Proposition \ref{cylinders}, $g$ must be constant. Therefore, $g=p$ or $g=-1/\bar{p}$ and then $X(\Sigma)$ is part of a totally geodesic $2-$sphere in $\s^2\times\R$ (Remark \ref{2-spheres}).
\end{proof}

\begin{lemma}\label{lemma-r} Let $X:\Sigma\to\S2R$ be a minimal conformal immersion. Then the Gauss map $g$ and the maps $p$ and $r$ satisfy
\begin{equation}\label{eq-r}
r_{z} = \frac{2(\bar{g}-\bar{p})(1+g\bar{p})}{(1+|g|^2)(|g-p|^2-|1+\bar{g}p|^2)}g_z
\end{equation}
when $g\neq \infty$.
\end{lemma}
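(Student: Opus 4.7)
The plan is to compute $r_z$ directly from the definition $r=\log|X|=\tfrac{1}{2}\log(F\bar F+h^2)$, and then convert the resulting expression into one involving $g_z$ by recycling two formulas already established in Section~4.

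First I would apply $\partial_z$ to obtain
\[r_z=\frac{F_z\bar F+F\bar F_z+2hh_z}{2|X|^2}.\]
Inserting $h_z=\eta/2$ together with $F_z=-\eta/(2\bar g)$ from \eqref{eq-F} and $\bar F_z=\overline{F_{\bar z}}=\bar g\eta/2$, this simplifies to
\[r_z=\frac{\eta}{4|X|^2\bar g}\bigl(\bar g^2 F-\bar F+2h\bar g\bigr).\]

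The next step is to rewrite the parenthesized expression purely in terms of $|X|$, $g$ and $p$. From $p=F/(|X|+h)$ and $|F|^2=|X|^2-h^2$ one derives $F=2|X|p/(1+|p|^2)$, $\bar F=2|X|\bar p/(1+|p|^2)$ and $h=|X|(1-|p|^2)/(1+|p|^2)$. Substituting and using the algebraic identity $p\bar g^2-\bar p+\bar g(1-|p|^2)=(\bar g-\bar p)(1+\bar g p)$, one obtains
\[\bar g^2F-\bar F+2h\bar g=\frac{2|X|(\bar g-\bar p)(1+\bar g p)}{1+|p|^2},\]
which, once the denominator $|1+\bar g p|^2=(1+\bar g p)(1+g\bar p)$ is recognized, matches (up to a conjugate factor) the numerator of the claim.

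Finally, I would convert the prefactor into $g_z$ in two already-prepared stages: equation \eqref{eq-pbarz2} rewrites $\eta/(4|X|\bar g)=\bar p_z/(\bar g-\bar p)^2$, and equation \eqref{eq-barpz-expression} rewrites $\bar p_z$ as the explicit multiple of $g_z$ with denominator $(1+|g|^2)(|g-p|^2-|1+\bar g p|^2)$. Chaining these substitutions cancels one factor of $(\bar g-\bar p)$ and produces the claimed identity for $r_z$.

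The only real obstacle is the bookkeeping in the factorization step; everything else is assembly. As in the proofs of Lemma \ref{lemma-g-eta} and Proposition \ref{prop-gaussmap}, I would derive the formula on the open set where $g\neq 0,\infty$ and $p\neq g,-1/\bar g$, and then invoke continuity to propagate it to the exceptional loci.
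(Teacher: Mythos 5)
Your route is correct but genuinely different from the paper's. The paper does not differentiate $r=\log|X|$ directly: it starts from the minimality identity of Lemma \ref{lemma-g-eta}, substitutes $\eta = 4|X|\bar g\,\bar p_z/(\bar g-\bar p)^2$ from \eqref{eq-pbarz2} into $(\log\eta)_{\bar z}$ so that $r_{\bar z}$ appears as the $(\log|X|)_{\bar z}$ term of a logarithmic derivative, eliminates $\bar p_{z\bar z}$ via the harmonicity equation \eqref{harmonicity-p}, and only then converts to $g$-derivatives with \eqref{eq-pz-expression}. Your computation of $r_z=\tfrac12\partial_z\log(F\bar F+h^2)$ from \eqref{eq-F} and the parametrization $F=2|X|p/(1+|p|^2)$, $h=|X|(1-|p|^2)/(1+|p|^2)$ is more elementary: it bypasses Lemma \ref{lemma-g-eta} and \eqref{harmonicity-p} entirely, and it cleanly separates the part valid for any conformal immersion, namely $r_z=2(1+\bar gp)\bar p_z/\big((\bar g-\bar p)(1+|p|^2)\big)$, from the step where minimality enters through \eqref{eq-barpz-expression}. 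The continuity argument at the exceptional loci parallels the paper's.

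One point you should not gloss over: your chain of substitutions yields
\[
r_z=\frac{2(\bar g-\bar p)(1+\bar gp)}{(1+|g|^2)\big(|g-p|^2-|1+\bar gp|^2\big)}\,g_z,
\]
with the factor $(1+\bar gp)$, not the factor $(1+g\bar p)$ printed in \eqref{eq-r}; these are conjugates of one another and not equal in general. You noticed the mismatch (``up to a conjugate factor'') but then asserted that the claimed identity is produced, which it is not as printed. In fact your version is the correct one: carrying the paper's own proof to its end --- conjugating \eqref{eq-pz-expression} and substituting into its displayed expression for $r_{\bar z}$ --- gives exactly the same $(1+\bar gp)$, and a direct check on the helicoid example (where $r=v$, so $r_z=-i/2$) confirms your formula and rules out the printed one. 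So the discrepancy is a typo in the statement rather than an error in your derivation; state the final formula as you actually obtained it rather than claiming agreement with \eqref{eq-r} verbatim.
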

\begin{proof}
Let $U\subset\Sigma$ be an open set on which $p\neq g$. Then, by equation \eqref{eq-pz-and-barpz}, $\bar{p}_z$ does not vanish on $U$, otherwise $p$ is a constant function. By Lemma \ref{lemma-g-eta} and equation \eqref{eq-pbarz2}, we get
\[\frac{1-|g|^2}{1+|g|^2}(\log \bar{g})_{\bar{z}} = \Bigg(\log \frac{4|X|\bar{g}\bar{p}_z}{(\bar{g}-\bar{p})^2} \Bigg)_{\bar{z}}.\]

Away from the points where $g=0$ and $\infty$, we have
\[-\frac{2|g|^2}{1+|g|^2}\frac{\bar{g}_{\bar{z}}}{\bar{g}}
=r_{\bar{z}}+\frac{\bar{p}_{z\bar{z}}}{\bar{p}_z}-2\frac{\bar{g}_{\bar{z}}-\bar{p}_{\bar{z}}}{\bar{g}-\bar{p}}
\]
and, by equation \eqref{harmonicity-p}, we obtain
\[r_{\bar{z}}=\frac{2(1+g\bar{p})}{(\bar{g}-\bar{p})(1+|g|^2)}\bar{g}_{\bar{z}}-\frac{2(1+\bar{g}p)}{(\bar{g}-\bar{p})(1+|p|^2)}\bar{p}_{\bar{z}}.
\]
By continuity, this holds on a neighbourhood of a point which $g=0$. Then, using equation \eqref{eq-pz-expression}, we get the conjugate of our assertion and, by continuity, it also holds at a point where $p=g$.
\end{proof}

In the following we establish our main result. As was seen before in the Proposition \ref{cylinders},
vertical cylinders over geodesics of $\s^2$ in $\S2R$ are the only surfaces which the Gauss map is singular. Moreover, any two of them differ by an isometry of $\S2R$. The next result prove that the same rigidity happens for any two minimal conformal immersions into $\S2R$ with the same non-constant Gauss map.

\begin{theorem}\label{th-A}
Let $X:\Sigma\to\S2R$ be a minimal conformal immersion and $g$ be its non-constant Gauss map. If $\hat{X}:\Sigma\to\S2R$ is another minimal conformal immersion with the same Gauss map of $X$, then $\hat{X}=f\circ X$, with $f\in\iso{\S2R}$ given either by $f=(\id,T)$ or $f=(\mathcal{A},T)$, where $\mathcal{A}$ denotes the antipodal map on $\s^2$ and $T$ is a translation on $\R$.
\end{theorem}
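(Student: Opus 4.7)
The plan is as follows. Let $H(z):=\hat\eta/\eta$. Applying Lemma~\ref{lemma-g-eta} to both $X$ and $\hat X$ gives $H_{\bar z}=0$, so $H$ is holomorphic. Equations \eqref{eq-F} then imply $\hat F_z=HF_z$ and $\hat F_{\bar z}=\bar H F_{\bar z}$, and the mixed-partial compatibility $(\hat F_z)_{\bar z}=(\hat F_{\bar z})_z$ collapses to $(H-\bar H)F_{z\bar z}=0$. A direct computation from \eqref{eq-F} and Lemma~\ref{lemma-g-eta} yields
\[
F_{z\bar z}=\frac{\eta\,g\,\bar g_{\bar z}}{\bar g\,(1+|g|^2)},
\]
which cannot vanish on any open subset of $\Sigma$: such vanishing would require $g$ constant (excluded by hypothesis), $g$ antiholomorphic (excluded by Proposition~\ref{anti-holomorphic}), or $\eta\equiv 0$ (which by Lemma~\ref{lemma-g-eta} again forces $g$ constant). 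Hence $H-\bar H\equiv 0$, and since $H$ is holomorphic, $H=c\in\R^*$ is a real constant. Integrating $\hat\eta=c\eta$ against \eqref{eq-F} yields $\hat F=cF+C$ and $\hat h=ch+D$ for constants $C\in\C$, $D\in\R$, i.e., in our model of $\s^2\times\R$,
\[
\hat X=cX+T,\qquad T:=(\re{C},\im{C},D)\in\R^3.
\]

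Next, impose the minimality of $\hat X$ via \eqref{eqminimality-2}. Writing $\hat\Omega=c\Omega+\Omega'$ with $\Omega':=C\bar g+\bar Cg+D(1-|g|^2)$ and subtracting $c^2$ times \eqref{eqminimality-2} for $X$, one obtains
\[
4(|\hat X|^2-c^2|X|^2)\,g\bar g_{\bar z}=-c\,\bar\eta\,(1+|g|^2)\,\Omega'.
\]
Because $X$ itself is minimal, \eqref{eqminimality-2} rearranges to $\bar\eta(1+|g|^2)/(g\bar g_{\bar z})=-4|X|^2/\Omega$. Together with the algebraic identities $\Omega=(1+|g|^2)\langle X,N\rangle$ and $\Omega'=(1+|g|^2)\langle T,N\rangle$ (Euclidean inner product in $\R^3$), the displayed equation reduces to the pointwise identity
\[
\langle X,N\rangle\,\bigl[\,2c\,\langle X,T\rangle+|T|^2\,\bigr]=c\,|X|^2\,\langle N,T\rangle\qquad\text{on }\Sigma,
\]
a single scalar equation at each $z\in\Sigma$ that is quadratic in $T\in\R^3$ and passes through $T=0$.

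The crucial step is to force $T=0$ from this identity. Since $g$ is non-constant, $(X(z),N(z))$ traces a genuine $2$-dimensional subset of $(\R^3\setminus\{0\})\times\s^2$. Differentiating the identity in $z$ at a point $z_0$ with $g_z(z_0)\ne 0$, using $\langle X_z,N\rangle=0$ and $\langle N,N_z\rangle=0$, yields additional $\R$-linear relations among $T_1,T_2,T_3$ and $|T|^2$. A Taylor expansion around $z_0$ provides, through iterated derivatives, enough $\R$-linearly independent conditions to conclude $T_1=T_2=T_3=0$, except in the degenerate case where $X(\Sigma)$ lies on a totally geodesic $2$-sphere. In that case $X=|X|N$ with $|X|$ constant, and the identity simplifies to $c|X|\langle N,T\rangle+|T|^2=0$ for $N$ ranging over an open subset of $\s^2$, which forces $T=0$ directly.

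With $T=0$, $\hat X=cX$ in $\R^3\setminus\{0\}$. For $c>0$, set $s:=\log c$ and $f:=(\id,T_s)$ with $T_s(t)=t+s$; for $c<0$, set $s:=\log|c|$ and $f:=(\mathcal A,T_s)$. By Section~2, in our model $f$ acts as $x\mapsto e^sx$ and $x\mapsto-e^sx$ respectively, so $\hat X=f\circ X$ in both cases, and $f\in\iso{\s^2\times\R}$ has the required form. The main obstacle is the third step: converting the single scalar quadratic identity into the vanishing of the three-component vector $T\in\R^3$. This requires exploiting both the two-dimensional nature of $\Sigma$ (via derivatives) and the non-constancy of $g$ (via the resulting variation of $(X,N)$), together with a separate treatment of the rigid totally geodesic sphere case.
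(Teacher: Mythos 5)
Your reduction is genuinely different from the paper's, and its first two steps are sound: applying Lemma \ref{lemma-g-eta} to both immersions does show that $H=\hat\eta/\eta$ is holomorphic; the compatibility $(H-\bar H)F_{z\bar z}=0$ together with your (correct) formula for $F_{z\bar z}$ forces $H$ to be a real constant $c$ on the dense open set where $\eta\, g\,\bar g_{\bar z}\neq 0$ (the complement cannot contain an open set because $g$ is non-constant, is not anti-holomorphic by Proposition \ref{anti-holomorphic}, and everything is real-analytic); integration then gives $\hat X=cX+T$. The pointwise identity $\langle X,N\rangle\bigl(2c\langle X,T\rangle+|T|^2\bigr)=c|X|^2\langle T,N\rangle$ is also correctly derived from \eqref{eqminimality-2}, using $\Omega=(1+|g|^2)\langle X,N\rangle$ and $\Omega'=(1+|g|^2)\langle T,N\rangle$.

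The gap is exactly where you flag it: the step $T=0$. Your reduction has converted the theorem into the claim ``if $X$ and $cX+T$ are both minimal then $T=0$,'' which is essentially the entire content of the statement, and what you offer for it --- differentiate the scalar identity and collect ``enough $\R$-linearly independent conditions'' from a Taylor expansion --- is a plan, not a proof. You never write down the differentiated relations, and their independence is not automatic: differentiating brings in $N_z$, hence the second fundamental form of $X$, so one must exclude degeneracies (points or curves where the relevant Gram determinants vanish) for an \emph{arbitrary} minimal $X$ with non-constant Gauss map, whereas you only treat the one degenerate configuration you noticed (the totally geodesic sphere). Until that argument is carried out, the proof is incomplete. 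For contrast, the paper avoids this issue entirely: from Proposition \ref{prop-gaussmap} it extracts the quadratic equation \eqref{eq-3} for $p$ with coefficients depending only on $g$, proves that for regular $g$ (with $|g|\neq 1$) this quadratic has exactly the two roots $p$ and $-1/\bar p$, concludes $\hat p\in\{p,-1/\bar p\}$ pointwise, and then uses Lemma \ref{lemma-r} to show $|\hat X|/|X|$ is constant. If you wish to keep your route, the missing piece is a genuine proof of the $T=0$ claim, with the same care the paper devotes to showing that \eqref{eq-3} has two distinct roots.
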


\begin{proof}
Firstly, we can assume that $g$ is a regular map. Otherwise, by Proposition \ref{cylinders}, we get that $X(\Sigma)$ is a vertical cylinder over a geodesic of $\s^2$ in $\S2R$, and then $g$ is constant.

Suppose that $g$ is a regular map on $\Sigma$. Then $|g|$ cannot be constant on an open set $U\subset\Sigma$, otherwise $g_z\bar{g}=-g\bar{g}_z$ and follows that $g$ is a singular map. So, we restrict ourselves to this open set $U\subset\Sigma$ on which $|g|\neq 1$.

By Proposition \ref{prop-gaussmap}, for the minimal conformal immersion $X:\Sigma\to\S2R$, we get
\begin{equation}\label{eq-gaussmap-2}
  A(1-|p|^2)+Bp+C\bar{p}=0,
\end{equation}
where the coefficients $A, B$ and $C$ depend on $g,\bar{g}$ and theirs derivatives, given by
\begin{align*}
  A &= -(1-|g|^4)g_{z\bar{z}}+2g|g_z|^2-2|g|^2\bar{g}g_zg_{\bar{z}}, \\
  B &= -2\bar{g}(1+|g|^2)g_{z\bar{z}}-2|g_z|^2+2\bar{g}^2g_zg_{\bar{z}},  \\
  C &= -2g(1+|g|^2)g_{z\bar{z}}+2g^2|g_z|^2+2(1+2|g|^2)g_zg_{\bar{z}}.
\end{align*}

If $A=0$ at some $z_0\in U$, at this point we have $(1-|g|^2)Bp+(1-|g|^2)C\bar{p}=0$ and then
\[2(1+|g|^2)g_z\Big\{(\bar{g}^2g_{\bar{z}}-\bar{g}_{\bar{z}})p+(g_{\bar{z}}-g^2\bar{g}_{\bar{z}})\bar{p}\Big\}=0.\]
When $g_z(z_0)\neq 0$, the equation above has unique solution $p(z_0)=0$ if, and only if, $|\bar{g}^2g_{\bar{z}}-\bar{g}_{\bar{z}}|^2-|g_{\bar{z}}-g^2\bar{g}_{\bar{z}}|^2\neq 0,$ that is, $(1-|g|^4)(|g_z|^2-|\bar{g}_z|^2)\neq 0$ which holds in $U$. Moreover, $A$ cannot vanish identically on an open set of $\Sigma$, since this would imply that $X$ is not an immersion.

If $A\neq 0$  at $z_0\in U$, then we consider the system
\begin{align}
  A(1-|p|^2)+Bp+C\bar{p}&= 0, \label{eq-1}\\
  \bar{A}(1-|p|^2)+\bar{C}p+\bar{B}\bar{p}&=0.\label{eq-2}
\end{align}
Since $\bar{A}\eqref{eq-1}+A\eqref{eq-2}$ implies
\begin{equation}\label{eq-6}
(\bar{A}B-A\bar{C})p+(\bar{A}C-A\bar{B})\bar{p}=0
\end{equation}
then, multiplying \eqref{eq-1} by $(\bar{A}C-A\bar{B})$ and dividing by $A$, we get
\begin{equation}\label{eq-3}
(\bar{A}B-A\bar{C})p^2+(|C|^2-|B|^2)p+(\bar{A}C-A\bar{B})=0,
\end{equation}
i.e., $\alpha p^2+\beta p-\bar{\alpha}=0$, with $\alpha=\bar{A}B-A\bar{C}$ and $\beta =|C|^2-|B|^2 $. Considering that the discriminant of the equation above is $\beta^2+4|\alpha|^2$, then equation \eqref{eq-3} has two distinct solutions for $p(z_0)$ if, and only if, $\alpha\neq 0$, that is, $\bar{A}B-A\bar{C}\neq 0$ (once  $\bar{A}B=A\bar{C}$ implies $|B|^2=|C|^2$).

On the other hand, note that
\begin{align}
2\bar{g}A-(1-|g|^2)B &= 2(1+|g|^2)g_zD,\label{eq-4}\\
2gA-(1-|g|^2)C &= 2(1+|g|^2)g_z E,\label{eq-5}
\end{align}
where $D=\bar{g}_{\bar{z}}-\bar{g}^2g_{\bar{z}}$ and $E=g^2\bar{g}_{\bar{z}}-g_{\bar{z}}$. Then $\bar{A}\eqref{eq-4}-A\overline{\eqref{eq-5}}$ implies that
\[(1-|g|^2)(A\bar{C}-\bar{A}B)=2(1+|g|^2)(g_z\bar{A}D-\bar{g}_{\bar{z}}A\bar{E}).\]
When $g_z(z_0)\neq 0$, then $\alpha=0$ implies $|D|^2=|E|^2$. By definition of $D$ and $E$, $|D|^2=|E|^2$ if, and only if, $(1-|g|^4)(|g_z|^2-|\bar{g}_z|^2)= 0$ which does not happen on $U$. Therefore, equation \eqref{eq-3} has two distinct solutions for each $p(z_0)$ on $U$, when $g$ is a regular map.

Let $\hat{X}:\Sigma\to\S2R$ be a minimal conformal immersion with the same regular Gauss map $g$ of $X$. In the following, we use the symbol  $\hat{ \ }$ to refer to objects related to $\hat{X}$.

If $p=\hat{p}$ then, by definition, we have that $X/|X|=\hat{X}/|\hat{X}|$, i.e., $\hat{X}=q(z)X$, where $q:\Sigma\to\R^+$ is a smooth function, given by $q(z)=|\hat{X}(z)|/|X(z)|$. However, we observe that the function $q$ is constant.
In fact, by Lemma \ref{lemma-r}, we get that $(\log q)_{\bar{z}}=0$, since $X$ and $\widehat{X}$ have the same Gauss map. Then $q$ is a real holomorphic function, that is, $q(z)=q_0$ is constant and therefore $\hat{X}=f\circ X$, where $f=(\id,T)\in\iso{\S2R}$ is a vertical translation of $\S2R$ with $T(t)=t+q_0$.

If $p\neq \hat{p}$ then we claim that $\hat{p}=-1/\bar{p}$. Indeed, by a direct computation, we get that $-1/\bar{p}$ satisfies equation \eqref{equation-gaussmap}. Since \eqref{eq-3} has exactly two distinct solutions when $g$ is a regular map, then $\hat{p}=-1/\bar{p}$. Moreover, $\hat{p}\bar{p}=-1$ implies that $\varphi^{-1}(\hat{p})$ and $\varphi^{-1}(p)$ are antipodal points on $\s^2$, that is, $\hat{X}=q(z)\mathcal{A}\circ X$, where $\mathcal{A}$ is the antipodal map on $\s^2$ and $q(z)=|\hat{X}(z)|/|X(z)|$. However, by Lemma \ref{lemma-r}, we get $(\log q)_{\bar{z}}=0$, since $X$ and $\hat{X}$ have the same Gauss map and $p$ and $\hat{p}$ are antipodal points. Then $q$ is a real holomorphic function, that is, $q(z)=q_0$ is constant and therefore $\hat{X}=f\circ X$, where $f=(\mathcal{A},T)\in\iso{\S2R}$ with $T(t)=t+q_0$ a translation on $\R$.
\end{proof}

\begin{remark}
We notice that the minimal conformal immersion $X:\Sigma\to\S2R$ can be recovered from of the Gauss map $g$, up to isometries described on Theorem \ref{th-A},
when $g$ is a non-constant map. In fact, by definition of $p$, we have $X=|X|\varphi^{-1}(p)$. On the other hand, by Lemma \ref{eq-r}, $|X|$ can be recovered from of $g$, up to a multiplicative constant, and the map $p$ can be recovered from of $g$ as the solution of equation \eqref{eq-3}, up to change of the antipodal map on $\s^2$.
\end{remark}

\begin{remark}[Chirstoffel's problem] In 1867, E. B. Christoffel \cite{Christoffel1867} considered the following problem: Given an immersion $X:\Sigma\to \R^3$ of an oriented Riemannian surface $\Sigma$, when does $X$ possess a non-congruent immersion $\hat{X}:\Sigma\to\R^3$, such that the tangent planes at each point of $X$ and $\hat{X}$ are parallels and the metrics induced by $X$ and $\hat{X}$ are conformally equivalents? In the literature, this problem is known as Christoffel's problem (we may refer to \cite{DajczerTojeiro16}, \cite[Chapter 9]{Jensen16} and \cite{Samuel47}).

The local solution found by Chirstoffel depends of the orientation of $X$ and $\hat{X}$. Suppose that there are $X$ and $\hat{X}$ satisfying the Chirstoffel's problem. If $X$ and $\hat{X}$ induce the same orientation at each point of $\Sigma$, then $X$ is minimal and $\hat{X}$ is conformal associate of $X$. Otherwise, if $X$ and $\hat{X}$ induce opposite orientations at each point of $\Sigma$, then $X$ is isothermic and, in this case, the immersion $\hat{X}$ is called \emph{Christoffel transform} of $X$.

Since we identify $\S2R$ with $\R^3\setminus \{0\}$, endowed 
with a metric conformally equivalent to the Euclidean metric of $\R^3$, a conformal immersion $X:\Sigma\to\S2R$ is a conformal immersion $X:\Sigma\to\R^3$ as well. Moreover, the definition of Gauss map in $\S2R$ coincides with the definition in $\R^3$. Then given $X$ and $\hat{X}$, the condition $g=\hat{g}$ implies that the tangent planes are parallels at each point of $\Sigma$ and $X$ and $\hat{X}$ induce the same orientation. By the Christoffel's Theorem, either $X$ and $\hat{X}$ differ by an isometry of $\R^3$ or $X$ and $\hat{X}$ are both minimal conformal immersions on $\R^3$. In the second case, we get that $g$ is anti-holomorphic and, by Proposition \ref{anti-holomorphic}, $g$ must be a constant map. Then $X(\Sigma)$ and $\hat{X}(\Sigma)$ are parts of planes as surfaces in $\R^3$ with the same constant Gauss map. Therefore, $X(\Sigma)$ and $\hat{X}(\Sigma)$ are parts of the same vertical cylinder over a geodesic of $\s^2$ in  $\s^2\times\R$.
\end{remark}

\section{Examples}

As previously shown in the last section, minimal conformal immersions with constant Gauss map are vertical cylinders over geodesic of $\s^2$ in $\S2R$. In the case where the Gauss map $g$ is a non-constant holomorphic map, we have totally geodesic $2-$spheres in $\S2R$. These ones are the simplest examples of minimal surfaces in $\S2R$.

Next, we describe briefly classical examples studied by Pedrosa and Ritoré \cite{PedrosaRitore99}, Rosenberg \cite{rosenberg2002} and Daniel \cite{danieltams09} using our model of $\S2R$.

\begin{example}[Helicoids]  Let $\beta \neq 0$. We consider the conformal immersion
\[X_{\beta}(u+iv) = \big(e^v\sin\rho(u)e^{i\beta v},e^v \cos\rho(u)\big),\]
where the function $\rho$ satisfies
\begin{equation}\label{edo-helicoid}
\rho'^2(u)=1+\beta^2\sin^2\rho(u).
\end{equation}

This conformal immersion was presented in \cite[Section 4.2]{danieltams09} and corresponds to the minimal helicoid in $\S2R$. Following Daniel's terminology, for $\beta>0$ we say that $X_{\beta}$ is a right helicoid and for $\beta<0$ we say that $X_{\beta}$ is a left helicoid.

We can assume that $\rho'(u)>0$. We compute the normal vector $N_{\beta}$ by
\[N_{\beta}(u+iv) = \frac{1}{\rho'(u)}\begin{bmatrix}
                                            \sin\beta v+\beta\sin^2\rho(u)\cos\beta v
                                             \\
                                            -\cos\beta v+\beta\sin^2\rho(u)\sin\beta v \\
                                            \beta\sin\rho(u)\cos\rho(u) \\
                                          \end{bmatrix}.\]
Then the Gauss map $g_{\beta}$ is given by
\[g_{\beta}(u+iv) = \frac{(\beta\sin^2\rho(u)-i)e^{i\beta v}}{\rho'(u)+\beta\sin\rho(u)\cos\rho(u)}.\]

Moreover, the $r_{\beta}$ is given by $r_{\beta}(u+iv)=v$ and $p_{\beta}$ is given by
\[p_{\beta}(u+iv)=\frac{\sin\rho(u)}{1+\cos\rho(u)}e^{i\beta v}.\]

We note that when $\beta\to 0$ then $g_0(u+iv) = -i$, that is, $X_0$ is a vertical cylinder over a geodesic of $\s^2$ in  $\s^2\times\R$.

In order to obtain an explicit parametrization of the minimal helicoid in $\S2R$, we consider the following change of coordinates
\[u(s)=\int_{0}^{s}\frac{1}{\sqrt{1+\beta^2\sin^2(\sigma)}}\dif\sigma,\]
and we note that $\rho\circ u(s)=s$ satisfies equation \eqref{edo-helicoid}. In terms of these coordinates, we consider the immersion $\widetilde{X}_{\beta}:\C\to\S2R$ given by
\[\widetilde{X}_{\beta}(s+iv) = \big(e^v\sin(s)e^{i\beta v},e^v \cos(s)\big),\]
and, therefore, the surface $\widetilde{X}_{\beta}(\C)$ is the analytic continuation of the minimal helicoid in $\S2R$.

\begin{figure}[!ht]
\centering
\captionsetup{justification=centering}
\caption{\bf Part of the minimal helicoid in $\S2R$, with $\beta = 4$:}
\subfigure{\includegraphics[width=70mm]{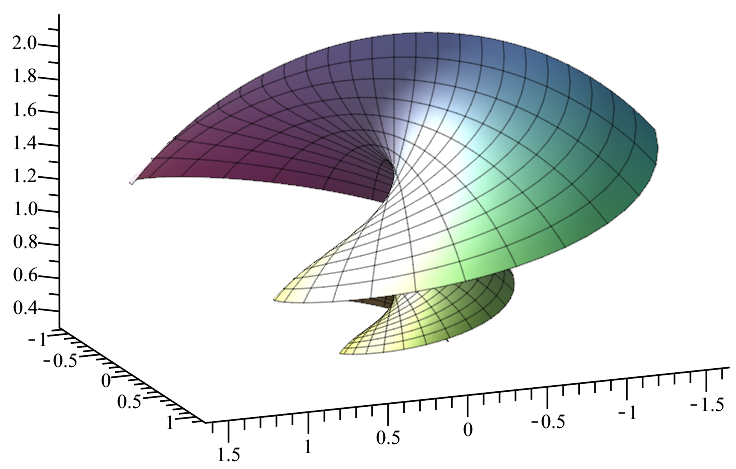}}\vfil
\caption{\bf Minimal helicoid in $\S2R$, with $\beta = 4$:}
\subfigure{\includegraphics[width=90mm]{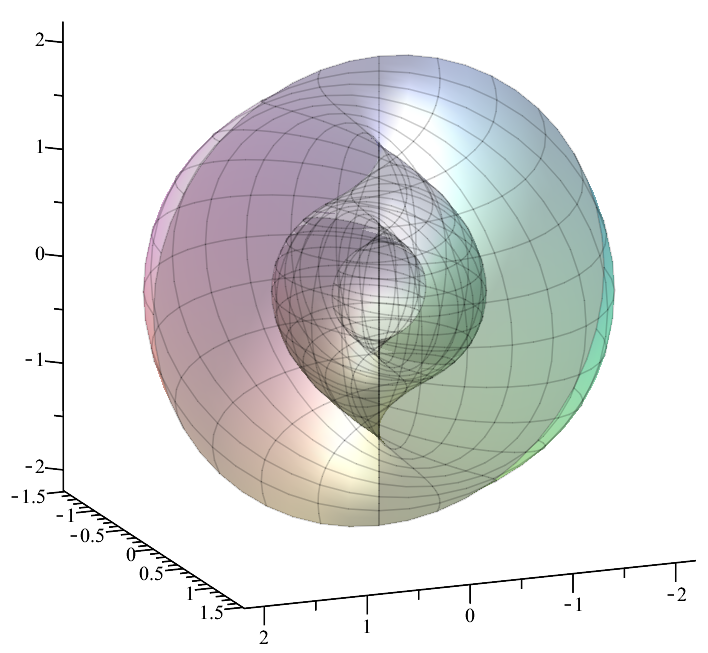}}
\end{figure}
\end{example}
\newpage

\begin{example}[Unduloids] Let $\alpha\in\R\setminus[-1,1]$. We consider the conformal immersion
\[X_{\alpha}(u+iv) = \big(e^u\sin\rho(u)e^{i\alpha v},e^u \cos\rho(u)\big),\]
where the function $\rho$ satisfies
\begin{equation}\label{edo-unduloid}
\rho'^2(u)+1=\alpha^2\sin^2\rho(u).
\end{equation}

This conformal immersion was presented in \cite[Section 4.2]{danieltams09} and corresponds to the minimal unduloid in $\S2R$.

We can assume that $\rho'(u)>0$ and $\rho(u)\in (0,\pi)$. We compute the normal vector $N_{\alpha}$ by
\[N_{\alpha}(u+iv) = \frac{1}{\alpha\sin\rho(u)}\begin{bmatrix}
                                            (\rho'(u)\sin\rho(u)-\cos\rho(u))\cos\alpha v
                                             \\
                                (\rho'(u)\sin\rho(u)-\cos\rho(u))\sin\alpha v \\
                            \sin\rho(u)+\rho'(u)\cos\rho(u) \\
                                          \end{bmatrix}.\]
Then the Gauss map $g_{\alpha}$ is given by
\[g_{\alpha}(u+iv) = \frac{(\rho'(u)\sin\rho(u)-\cos\rho(u))e^{i\alpha v}}{(1+\alpha)\sin\rho(u)+\rho'(u)\cos\rho(u)}.\]

Moreover, the $r_{\alpha}$ is given by $r_{\alpha}(u+iv)=u$ and $p_{\alpha}$ is given by
\[p_{\alpha}(u+iv)=\frac{\sin\rho(u)}{1+\cos\rho(u)}e^{i\alpha v}.\]

We note that when $\alpha\to \pm 1$ then $\rho'^2(u)+\cos^2\rho(u)=0$, that is, $\rho(u)=\pi/2$. Since
\[N_{-1}=\begin{bmatrix}
           0 \\
           0 \\
           -1
         \end{bmatrix} \text{ \ and \ }
         N_{+1}=\begin{bmatrix}
           0 \\
           0 \\
           1
         \end{bmatrix},\]
we have that $g_{-1}(u+iv)=\infty$ and $g_{+1}(u+iv)=0$. Therefore, $X_{-1}$ and $X_{+1}$ are vertical cylinders over geodesics of $\s^2$ in $\s^2\times\R$.

In order to obtain an explicit parametrization of the minimal unduloid in $\S2R$, we consider the following change of coordinates
\[u(s)=\int_{0}^{s}\frac{1}{\sqrt{1+(\alpha^2-1)\sin^2(\sigma)}}\dif\sigma,\]
and we note that
\[\rho\circ u(s)
=\arcsin\Bigg(\frac{\sqrt{1+(\alpha^2-1)\sin^2(s)}}{\alpha}\Bigg)\]
satisfies equation \eqref{edo-unduloid}. In terms of these coordinates, we consider the immersion $\widetilde{X}_{\alpha}:\C\to\S2R$ given by
\[\widetilde{X}_{\alpha}(s+iv) = \frac{e^{u(s)}}{\alpha}\Big(\sqrt{1+(\alpha^2-1)\sin^2(s)} e^{i\alpha v},\sqrt{\alpha^2-1}\cos(s)\Big),\]
and, therefore, the surface $\widetilde{X}_{\alpha}(\C)$ is the analytic continuation of the minimal unduloid in $\S2R$.

\begin{figure}[!ht]
\centering
\captionsetup{justification=centering}
\caption{\bf  Generating curve of the minimal unduloid in $\S2R$, with $\alpha = 8$:}
\subfigure{\includegraphics[width=55mm]{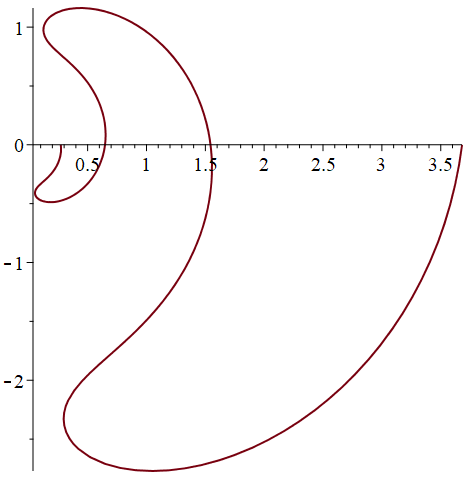}}\vfil
\caption{\bf Minimal unduloid in $\S2R$, with $\alpha = 8$:}
\subfigure{\includegraphics[width=110mm]{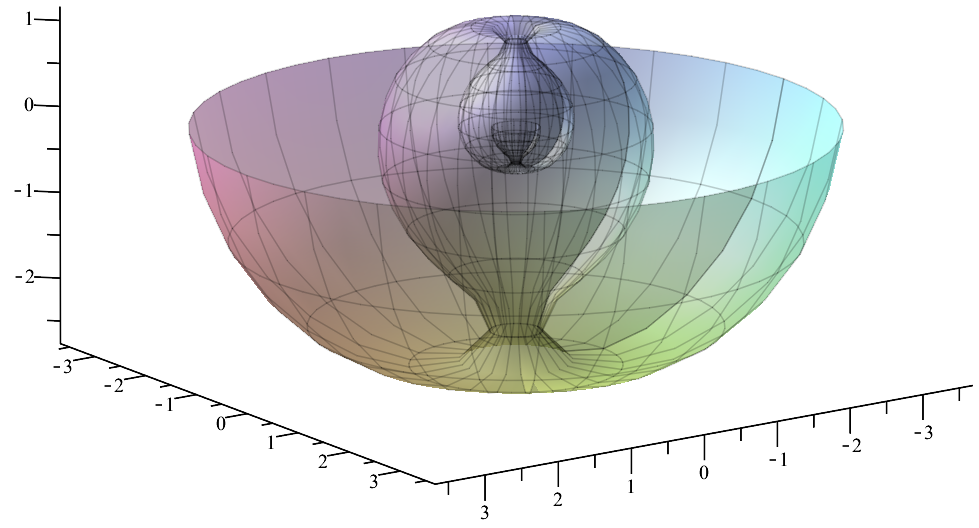}}
\end{figure}
\end{example}
\newpage

\bibliographystyle{amsplain}
\bibliography{references}

\end{document}